\documentclass[reqno,A4paper]{amsart}
\usepackage{xcolor}
\usepackage{tikz-cd}
\usepackage[all]{xy}
\usepackage{hyperref}
\usepackage{amsmath}
\usepackage{amssymb}
\usepackage{amsthm}
\usepackage{booktabs}
\usepackage{enumerate}
\usepackage{mathrsfs} 
\usepackage{eqlist}
\usepackage{array}

\setlength{\textwidth}{150mm}
\setlength{\textheight}{206mm}
\setlength{\oddsidemargin}{5mm}
\setlength{\evensidemargin}{5mm}


\theoremstyle{plain}
\newtheorem{theorem}{Theorem}[section]
\newtheorem{proposition}[theorem]{Proposition}
\newtheorem{lemma}{Lemma}[section]
\newtheorem{corollary}{Corollary}[theorem]



\theoremstyle{definition}
\newtheorem{definition}{Definition}
\newtheorem{remark}{\textup{Remark}} 
\newtheorem{example}{\textit{Example}} 





\numberwithin{equation}{section}



\begin{document}

\title[A new categorical equivalence for Stone Algebras]
{A new categorical equivalence for Stone Algebras}
\author[Ismael Calomino \and Gustavo Pelaitay]
{Ismael Calomino* \and Gustavo Pelaitay**}

\newcommand{\acr}{\newline\indent}

\address{\llap{*\,}Facultad de Ciencias y Artes\acr
                   Universidad Cat\'olica de \'Avila\acr
                   C/Canteros s/n 05005 \'Avila\acr
                   ESPAÑA}
\email{ismaelm.calomino@ucavila.es}

\address{\llap{**\,}CONICET and Instituto de Ciencias B\'asicas\acr
                    Universidad Nacional de San Juan\acr
                    San Juan 5400\acr
                    ARGENTINA}
\email{gpelaitay@gmail.com}



\subjclass[2010]{Primary 06D15, 03G25 } 
\keywords{Kleene algebra, pseudocomplemented distributive lattice, Stone identity, intuitionistic negation}

\begin{abstract}The aim of this paper is to give a categorical equivalence for Stone algebras. We introduce the variety of Stone-Kleene algebras with intuitionistic negation, or Stone KAN-algebras for short, and explore Kalman's construction for Stone algebras. We examine the centered algebras within this new variety and prove that the category of Stone algebras is equivalent to the category of centered Stone KAN-algebras. Moreover, inspired by Monteiro's construction for Nelson algebras, we propose a method to construct a centered Stone KAN-algebra from a given Stone KAN-algebra and show the connection between Kalman's construction and Monteiro's construction.  
\end{abstract}

\maketitle

\section{Introduction} \label{sec1}

The definition of a Nelson algebra was introduced by Rasiowa in \cite{Rasiowa} and plays a role in the study of constructive propositional calculus with strong negation, analogous to the role of Boolean algebras in classical propositional calculus. Nelson algebras can be viewed as Kleene algebras with an additional binary operation which generalizes relative pseudocomplementation (\cite{Cignoli,J14}). In \cite{Sendlewski}, Sendlewski generalized results from Fidel and Vakarelov \cite{Fidel,Vakarelov} to prove the deep connection between Nelson algebras and Heyting algebras. To establish a similar relationship for a broader class of algebras, Sendlewski introduced the class $ \mathcal{K}_{\omega} $ of Kleene algebras with weak pseudocomplementation in \cite{Sendlewski1}, aiming to establish a topological relationship between this class and pseudocomplemented distributive lattices. The class $ \mathcal{K}_{\omega} $ encompasses the implication-free reducts of Nelson algebras, with the additional operation interpreted as weak pseudocomplementation. In \cite{Conrado-Miguel-Hernan}, the authors examine the connection between strong negation and intuitionistic negation in Nelson algebras. They introduce the variety of Kleene algebras with intuitionistic negation (abbreviated as KAN-algebras) and apply Kalman’s construction (see \cite{Kalman}) for pseudocomplemented distributive lattices. Through this construction, they establish that the algebraic category \textbf{PDL} of pseudocomplemented distributive lattices is equivalent to the algebraic category \textbf{KANc} of centered KAN-algebras. Furthermore, they show in \cite{Conrado-Miguel-Hernan} that the variety of KAN-algebras coincide with the class of $\mathcal{K}_\omega$-algebras.

On the other hand, it is well known that a Stone algebra is a pseudocomplemented distributive lattice $A$ which satisfies the equation $x^{\ast} \vee x^{\ast \ast} = 1$, where $^{*}$ denotes the pseudocomplement of $A$. Stone algebras were introduced by Grätzer and Schmidt in response to a problem given by Stone. These algebras are characterized as bounded distributive lattices in which each prime ideal contains a unique minimal prime ideal (\cite{GS}). Since their introduction, Stone algebras have been extensively studied in the literature (\cite{Balbes,G71}) and have found applications in various areas, such as conditional event algebras and rough sets (\cite{PP,Walker}). 

A central motivation for our work was to identify a suitable category that completes the following diagram:

\vspace{0.25cm}
\begin{center}
\[
\xymatrix{
\mathbf{Stone} \ar[d] \ar[r]^{K} & \mathbf{?} \ar[d]  \\
\mathbf{PDL} ~\ar[r]_{K} & \mathbf{KANc} }
\]
\end{center}
\begin{center}  Fig.1
\end{center}
\vspace{0.25cm}
where \textbf{Stone} is the algebraic category of Stone algebras, ${\bf{Stone}} \to {\bf{PDL}}$ represents the obvious forgetful functor and $K$ is the Kalman’s functor. To address this challenge, we introduce the variety of {\it centered Stone KAN-algebras} and prove that the category {\bf{Stone}} is equivalent to the category of centered Stone KAN-algebras. To this end, we leverage Kalman's construction described in \cite{Conrado-Miguel-Hernan} for pseudocomplemented distributive lattices. In forming this categorical equivalence, we take a distinct approach compared to that of Cignoli in \cite{Cignoli}, who uses topological techniques to prove that the \textbf{Stone} category is equivalent to the category of centered regular $\alpha$-De Morgan algebras.

A second motivation was to identify a functor that would ensure the commutativity of the following diagrams:

\vspace{0.25cm}
\begin{center}
\[
\xymatrix{
\mathbf{KAN} \ar[dr]_{?} \ar[r]^{\mathbf{\theta}} & \mathbf{PDL} \ar[d]^{K}  \\
& \mathbf{KANc} 
}
\quad\quad
\xymatrix{
\mathbf{SKAN} \ar[dr]_{?} \ar[r]^{\mathbf{\theta}} & \mathbf{Stone} \ar[d]^{K}  \\
& \mathbf{SKANc} 
}
\]
\end{center}
\vspace{0.25cm}
where $\theta$ is the functor induced by a particular congruence (see \cite{Conrado-Miguel-Hernan}). To achieve this goal, we draw inspiration from Monteiro's construction for centered Nelson algebras to propose a method that constructs a centered KAN-algebra (centered Stone KAN-algebra) from a given KAN-algebra (Stone KAN-algebra). This construction forms the basis for the desired functor, which we denote as {M} in reference of Monteiro’s approach.

The categories studied in this work are summarized below. This summary is provided to enhance the clarity and overall presentation of the work.

\vspace{0.25cm}
\begin{center}
\begin{table}[h!]
\centering
\renewcommand{\arraystretch}{1.2}
\setlength{\tabcolsep}{8pt}
\begin{tabular}{@{}>{\bfseries}l l l@{}}
\toprule
\textbf{Category} & \textbf{Objects} & \textbf{Morphisms} \\ 
\midrule
$\mathbf{PDL}$     & Distributive $p$-algebras         & Algebra homomorphisms \\ 
$\mathbf{Stone}$   & Stone algebras                  & Algebra homomorphisms \\ 
$\mathbf{KAN}$     & KAN-algebras                     & Algebra homomorphisms \\ 
$\mathbf{KANc}$    & Centered KAN-algebras           & Algebra homomorphisms \\ 
$\mathbf{SKAN}$    & Stone KAN-algebras              & Algebra homomorphisms \\ 
$\mathbf{SKANc}$   & Centered Stone KAN-algebras     & Algebra homomorphisms \\ 
\bottomrule
\end{tabular}
\caption{Categories with their corresponding objects and morphisms.}
\label{tab:categories}
\end{table}
\end{center}

\section{Preliminaries} \label{sec2}

In this section we review key results and definitions from \cite{Conrado-Miguel-Hernan}, focusing on the class of centered KAN-algebras. We also establish several results that will be instrumental throughout the remainder of this paper. It is worth mentioning that throughout this paper, if $\bf{V}$ is a variety, we will denote its corresponding algebraic category by the same symbol.

\subsection{Distributive p-algebras}

A pseudocomplemented distributive lattice, or distributive p-algebra for short, is an algebra $\langle A, \vee, \wedge, ^{\ast}, 0, 1 \rangle$ of type $(2,2,1,0,0)$, where $\langle A, \vee, \wedge, 0, 1 \rangle$ is a bounded distributive lattice, and for each $a, b \in A$, the following condition holds: $a \wedge b = 0$ if and only if $a \leq b^{\ast}$. So, $a^{\ast}$ is the greatest element of $A$ that is disjoint from $a$, that is, $a^{\ast} = \max \{x \in A \colon a \wedge x = 0\}$. In every distributive p-algebra, the conditions $1 = 0^{\ast}$ and $0 = 1^{\ast}$ necessarily hold (\cite{Balbes}). A distributive p-algebra $A$ is called a Stone algebra if it satisfies the equation 
\begin{equation} \label{stone}
x^{\ast} \vee x^{\ast\ast} = 1  
\end{equation}
for all $x \in A$. The class of Stone algebras is a variety introduced by Grätzer and Schmidt in \cite{GS}, named in honor of M. H. Stone. These algebras have been extensively studied by various authors and continue to be a significant area of research (see \cite{Chen1,Chen2,Chen3,Katrinak1,Katrinak2}). Moreover, they play a crucial role in the theory of rough sets (\cite{D,J14,Kumar2024,PP,GeWal}).

\subsection{KAN-algebras}

Recall that a Kleene algebra is an algebra $\langle T, \vee, \wedge, \sim, 0, 1 \rangle$ of type $(2,2,1,0,0)$ where $\langle T, \vee, \wedge, 0, 1 \rangle$ is a bounded distributive lattice and
\begin{enumerate}
\item[(K1)] $\sim \sim x = x$,
\item[(K2)] $\sim (x \vee y) = \sim x \wedge \sim y$,
\item[(K3)] $x \wedge \sim x \leq y \vee \sim y$.
\end{enumerate}
Additionally, in a Kleene algebra $T$, the following properties hold:
\begin{enumerate}
\item $\sim 0 = 1$ and $\sim 1=0$,
\item $\sim (x \wedge y) = \sim x \vee \sim y$,
\item $x \leq y$ implies $\sim y \leq \sim x$.
\end{enumerate}
A Kleene algebra is called centered if it has a center, i.e., an element $c$ such that $\sim c = c$. In every Kleene algebra the center, when it exists, is unique. 

Recall that a Nelson algebra (\cite{Rasiowa, Sendlewski}) is a Kleene algebra $T$ such that for each pair $x,y \in T$, there exists a binary operation $\to$ given by $x \to y := x \to_{\mathsf{HA}} (\sim x \vee y)$ (where $\to_{\mathsf{HA}}$ is the Heyting implication), and for every $x, y, z$, it holds that $(x \wedge y) \to z = x \to (y \to z)$. Nelson algebras can be viewed as algebras $\langle T, \vee, \wedge, \to, \sim, 0, 1 \rangle$ of type $(2, 2, 2, 1, 0, 0)$. The class of Nelson algebras forms a variety (\cite{BrignoleMonteiro,Brignole}).

Now, we introduce the class of Kleene algebras with intuitionistic negation, or KAN-algebras. The KAN-algebras have a weaker structure than Nelson algebra, just like distributive p-algebras have a weaker structure than Heyting algebras.

\begin{definition}\cite{Conrado-Miguel-Hernan} \label{defKAN}
A {\it{Kleene algebra with intuitionistic negation}}, or {\it{KAN-algebra}}, is an algebra $\langle T, \vee, \wedge, \sim, \neg, 0, 1 \rangle$ of type $(2,2,1,1,0,0)$ such that $\langle T, \vee, \wedge, \sim, 0, 1 \rangle$ is a Kleene algebra and the following conditions are satisfied:
\begin{itemize}
\item [(N1)] $\neg (x \wedge \neg (x \wedge y)) = \neg (x \wedge \neg y)$,
\item [(N2)] $\neg (x \vee y) = \neg x \wedge \neg y$,
\item [(N3)] $x \wedge \sim x = x \wedge \neg x$,
\item [(N4)] $\sim x \leq \neg x$,
\item [(N5)] $\neg (x \wedge y) = \neg ((\sim \neg x) \wedge y)$.
\end{itemize}
An algebra $\langle T, \vee, \wedge, \sim, \neg, c, 0, 1 \rangle$ of type $(2,2,1,1,0,0,0)$ is a {\it{centered KAN-algebra}}, or {\it{KANc-algebra}}, if $\langle T, \vee, \wedge, \sim, \neg, 0, 1 \rangle$ is a KAN-algebra and the structure $\langle T, \vee, \wedge, \sim, c, 0, 1 \rangle$ is a centered Kleene algebra.
\end{definition}

We denote the variety of KAN-algebras by $\bf{KAN}$ and the variety of centered KAN-algebras by $\bf{KANc}$.

\begin{example} 
Let $\langle T, \vee, \wedge, \sim, \to, 0, 1 \rangle$ be a Nelson algebra, and define $\neg x := x \to 0$. Then, the structure $\langle T, \vee, \wedge, \sim, \neg, 0, 1 \rangle$ is a KAN-algebra.
\end{example}

\begin{example} \label{noStoneKAN} 
Consider the bounded distributive lattice $T$ given by the following Hasse diagram:
\vspace{0.25cm}
\begin{center}
\hspace{0.25cm}
\put(00,00){\makebox(1,1){$\bullet$}}
\put(00,30){\makebox(1,1){$\bullet$}}
\put(-30,60){\makebox(1,1){$\bullet$}}
\put(30,60){\makebox(1,1){$\bullet$}}
\put(00,90){\makebox(1,1){$\bullet$}}
\put(00,120){\makebox(1,1){$\bullet$}} 
\put(00,00){\line(0,1){30}}
\put(00,30){\line(-1,1){30}}
\put(00,30){\line(1,1){30}}
\put(-30,60){\line(1,1){30}}
\put(30,60){\line(-1,1){30}}
\put(00,90){\line(0,1){30}} 
\put(10,00){\makebox(2,2){$0$}}
\put(10,30){\makebox(2,2){$a$}}
\put(-40,60){\makebox(2,2){$b$}} 
\put(40,60){\makebox(2,2){$c$}} 
\put(10,90){\makebox(2,2){$d$}}
\put(10,120){\makebox(2,2){$1$}} 
\end{center}
\vspace{0.25cm}
If we define the operators $\sim$ and $\neg$ as follows
\vspace{0.25cm}
\begin{center}
\begin{tabular}{|c|c|c|c|c|c|c|c|}\hline 
$x$          & $0$ & $a$ & $b$ & $c$ & $d$ & $1$ \\ \hline
$\sim x$  & $1$ & $d$  & $c$ & $b$ & $a$ & $0$ \\ \hline
$\neg x$  & $1$ & $1$ & $c$ & $b$ & $a$ & $0$ \\ \hline
\end{tabular}
\end{center}
\vspace{0.25cm}
then it can be verified that $\langle T,\vee,\wedge,\neg,\sim,0,1 \rangle$ is a KAN-algebra.
\end{example}

\begin{remark}
If $T \in {\bf{KAN}}$ is finite, then it is isomorphic to the reduct of a Nelson algebra. However, not every KAN-algebra is a reduct of a Nelson algebra (for details, see \cite{Conrado-Miguel-Hernan}). 
\end{remark}

A Kleene algebra $\langle T, \vee, \wedge, \sim, 0, 1 \rangle$ is said to be quasi weak pseudocomplemented if for each $x \in T$, there exists a maximum element in the set 
\[
\{ y \in T \colon x \wedge y \leq \sim x \},
\]
which is denoted by $\neg x$. The element $\neg x$ is characterized by the following property:  
\[
y \leq \neg x \Longleftrightarrow x \wedge y \leq \sim x,
\]
for all $y \in T$. The unary operation $\neg$ defined in this way is called a quasi weak pseudocomplementation and the algebra $\langle T, \vee, \wedge, \sim, \neg, 0, 1 \rangle$, of type $(2, 2, 1, 1, 0, 0)$, is called a {\it{Kleene algebra with a quasi weak pseudocomplementation}}, or simply a {\it{qwp-Kleene algebra}}. A qwp-Kleene algebra is called a \emph{Kleene algebra with a weak pseudocomplementation}, or \emph{wp-Kleene algebra}, if for each $x, y \in T$ we have 
\[
\neg(x \wedge y) = 1 \Longleftrightarrow \neg\neg x \leq \neg y.
\]
The class of wp-Kleene algebras is denoted by $\mathcal{K}_{\omega}$ (\cite{Sendlewski1}). Furthermore, the variety of KAN-algebras coincides with the class $\mathcal{K}_{\omega}$ as shown in \cite{Conrado-Miguel-Hernan}.

For any $T \in {\bf{KAN}}$ we adopt the following notation:
\begin{equation} \label{Delta}
\lozenge x := \sim \neg x 
\end{equation}
\begin{equation} \label{nabla}
\square x := \neg \sim x   
\end{equation}
It is easy to verify that $\square x = \sim \lozenge \sim x$ and $\lozenge x = \sim \square \sim x$. Using these operators, we can describe several important properties of KAN-algebras. Next, we will present the results from \cite{Conrado-Miguel-Hernan} that will be used in this paper and prove new results that are fundamental for the following sections.

\begin{proposition} \cite{Conrado-Miguel-Hernan} \label{Prop1}
Let $T \in {\bf{KAN}}$. Then:
\begin{enumerate}
\item $\neg 1=0$ and $\neg 0=1$,
\item $x\leq y$ implies $\neg y \leq \neg x$,
\item $\neg (x \wedge \sim x) = \neg (x \wedge \neg x) = 1$,
\item $\lozenge 0=0$ and $\square 1=1$,
\item $\lozenge 1=1$ and $\square 0=0$,
\item $\lozenge x \leq x \leq \square x$,
\item $\lozenge x \leq \neg \neg x \leq \square x$,
\item $\neg \lozenge x = \square \neg x = \neg x$.
\end{enumerate}
\end{proposition}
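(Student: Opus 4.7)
The plan is to derive the eight items sequentially, exploiting the axioms (N1)--(N5) together with the standard Kleene-algebra identities, so that later items may assume earlier ones.

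For item (1), I would start from (N4) with $x = 0$: since $\sim 0 = 1 \leq \neg 0$, this forces $\neg 0 = 1$. Then (N3) with $x = 1$ gives $1 \wedge \sim 1 = 1 \wedge \neg 1$, i.e., $0 = \neg 1$. For item (2), if $x \leq y$ then $x \vee y = y$, and applying (N2) yields $\neg y = \neg(x \vee y) = \neg x \wedge \neg y$, whence $\neg y \leq \neg x$. For item (3), the essential observation is that (N1) specialized at $y = 1$ produces
\[
\neg(x \wedge \neg x) = \neg(x \wedge \neg(x \wedge 1)) = \neg(x \wedge \neg 1) = \neg(x \wedge 0) = \neg 0 = 1;
\]
by (N3) this also yields $\neg(x \wedge \sim x) = 1$.

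Items (4) and (5) follow by unfolding the definitions of $\lozenge$ and $\square$ together with item (1) and the Kleene identities $\sim 0 = 1$, $\sim 1 = 0$. For item (6), I would apply (N4) at $\sim x$ to obtain $x = \sim\sim x \leq \neg\sim x = \square x$, and apply $\sim$ to both sides of $\sim x \leq \neg x$ (using the fact that $\sim$ reverses order on a Kleene algebra) to obtain $\lozenge x = \sim\neg x \leq \sim\sim x = x$. For item (7), (N4) applied with $\neg x$ in place of $x$ gives $\sim\neg x \leq \neg\neg x$, while item (2) applied to $\sim x \leq \neg x$ gives $\neg\neg x \leq \neg\sim x = \square x$.

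The main obstacle lies in item (8). The identity $\square \neg x = \neg \lozenge x$ is immediate from the definitions, since both equal $\neg\sim\neg x$. The nontrivial equality $\neg\lozenge x = \neg x$ requires the right specialization of (N5): setting $y = 1$ produces
\[
\neg x = \neg(x \wedge 1) = \neg(\sim\neg x \wedge 1) = \neg\sim\neg x = \neg\lozenge x,
\]
closing the argument. The delicate point is recognizing that this single application of (N5) carries the weight of the proof; once this is seen, the rest of the proposition is essentially bookkeeping.
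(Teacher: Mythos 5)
Your proof is correct. Note that the paper does not actually prove this proposition: it is quoted verbatim from the cited reference (Gomez--Marcos--San Mart\'{\i}n), so there is no internal argument to compare yours against; your self-contained derivation from (N1)--(N5) and the Kleene identities is a perfectly valid substitute. The two steps that carry real content are exactly the ones you flag: item (3) via (N1) at $y=1$ together with $\neg 1 = 0$, and the equality $\neg\lozenge x = \neg x$ in item (8) via (N5) at $y=1$, i.e.\ $\neg x = \neg(x\wedge 1) = \neg(\sim\neg x \wedge 1) = \neg\sim\neg x$; both specializations check out, and the remaining items follow from (N2), (N4), order-reversal of $\sim$, and the definitions of $\lozenge$ and $\square$ as you describe.
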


\begin{lemma} \cite{Conrado-Miguel-Hernan} \label{Prop2}
Let $T \in {\bf{KAN}}$. Then $x \leq y$ if and only if $\lozenge x \leq \lozenge y$ and $\square x \leq \square y$.
\end{lemma}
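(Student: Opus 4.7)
The plan is to prove each direction separately. The direct implication is routine: assuming $x \leq y$, Proposition 1 (2) (antitonicity of $\neg$) together with the fact that $\sim$ reverses the order yields $\neg y \leq \neg x$, hence $\lozenge x = \sim \neg x \leq \sim \neg y = \lozenge y$; the argument for $\square x \leq \square y$ is analogous.

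The converse is the content of the lemma. The strategy is to establish the two decomposition identities
\begin{enumerate}
\item[(a)] $x = \lozenge x \vee (x \wedge \sim x)$,
\item[(b)] $y = \square y \wedge (y \vee \sim y)$,
\end{enumerate}
and then combine them with the Kleene inequality (K3). For (a), distributivity together with $\lozenge x \leq x$ (Proposition 1 (6)) gives $\lozenge x \vee (x \wedge \sim x) = x \wedge (\lozenge x \vee \sim x)$; a short calculation then uses the Kleene de Morgan law, (N3), and $\sim \sim x = x$ to rewrite $\lozenge x \vee \sim x = \sim(\neg x \wedge x) = \sim(x \wedge \sim x) = x \vee \sim x$, so (a) reduces to the trivial equality $x = x \wedge (x \vee \sim x)$. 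For (b), distributivity and $y \leq \square y$ (Proposition 1 (6)) give $\square y \wedge (y \vee \sim y) = y \vee (\square y \wedge \sim y)$, and the quasi weak pseudocomplement characterization applied to $\sim y$ yields $\square y \wedge \sim y = \sim y \wedge \neg \sim y \leq \sim \sim y = y$, as required.

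Once (a) and (b) are proved, I would close the argument as follows. From $\square x \leq \square y$ and Proposition 1 (6) we have $x \leq \square x \leq \square y$, so in particular $x \wedge \sim x \leq \square y$. Combining with (K3), which says $x \wedge \sim x \leq y \vee \sim y$, and then with (b), we get $x \wedge \sim x \leq \square y \wedge (y \vee \sim y) = y$. Together with $\lozenge x \leq \lozenge y \leq y$ (using Proposition 1 (6) again), identity (a) yields $x = \lozenge x \vee (x \wedge \sim x) \leq y$.

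The main obstacle I anticipate is spotting the decomposition identities (a) and (b); once they are in place, the rest is a routine assembly. The crucial point is that (b) allows us to sharpen the (K3)-bound $y \vee \sim y$ down to $y$ itself, exploiting precisely the extra information $x \leq \square y$ that is provided by the second hypothesis.
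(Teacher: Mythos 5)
Your proof is correct. Note that the paper itself gives no argument for this lemma: it is imported verbatim from the cited reference \cite{Conrado-Miguel-Hernan}, so there is no in-paper proof to compare against, and your argument stands as an independent, self-contained verification. Both directions check out: the forward implication is the routine antitonicity argument, and for the converse your two decomposition identities are sound. Identity (a), $x=\lozenge x\vee(x\wedge\sim x)$, follows exactly as you say from distributivity, $\lozenge x\leq x$, De Morgan for $\sim$, (N3) and (K1); it is in substance the identity the paper later records as Proposition \ref{Prop_3}(7). Identity (b), $y=\square y\wedge(y\vee\sim y)$, is likewise correct; the key inequality $\square y\wedge\sim y\leq y$ can be obtained, as you do, from the quasi weak pseudocomplement characterization (legitimate here, since the paper identifies $\mathbf{KAN}$ with the class $\mathcal{K}_{\omega}$), but it also drops out even more directly from (N3) applied to $\sim y$: $\neg\sim y\wedge\sim y=\sim\sim y\wedge\sim y=y\wedge\sim y\leq y$. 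Your identity (b) is essentially Proposition \ref{Prop_3}(9) combined with (7), and since the paper's proofs of those items do not invoke Lemma \ref{Prop2}, there is no circularity in relying on (or re-deriving) them. The closing assembly — using $\square x\leq\square y$ to get $x\wedge\sim x\leq\square y$, intersecting with the (K3) bound $y\vee\sim y$ to land inside $y$, and then joining with $\lozenge x\leq\lozenge y\leq y$ — is a nice way to see exactly where each of the two hypotheses is used, and is a perfectly good elementary route to the determination principle.
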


From Lemma \ref{Prop2} we have the following result.

\begin{lemma} (Moisil's determination principle) \label{MDP}
Let $T \in {\bf{KAN}}$. If $\square x = \square y$ and $\lozenge x = \lozenge y$, then $x = y$.    
\end{lemma}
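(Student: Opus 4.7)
The proof should be an immediate consequence of Lemma \ref{Prop2} together with the antisymmetry of the lattice order.

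My plan is to deduce the two inequalities $x \leq y$ and $y \leq x$ separately from the hypotheses, then invoke antisymmetry. Concretely, suppose $\square x = \square y$ and $\lozenge x = \lozenge y$. From $\lozenge x = \lozenge y$ we get $\lozenge x \leq \lozenge y$, and from $\square x = \square y$ we get $\square x \leq \square y$. Applying Lemma \ref{Prop2} in the ``if'' direction yields $x \leq y$. Reversing the roles of $x$ and $y$ (using again the equalities, which are symmetric) gives $\lozenge y \leq \lozenge x$ and $\square y \leq \square x$, hence $y \leq x$ by the same lemma.

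Combining $x \leq y$ and $y \leq x$ in the underlying bounded distributive lattice gives $x = y$, which is the desired conclusion. The only step that might be thought of as an ``obstacle'' is just noticing that Lemma \ref{Prop2} needs to be applied twice, once in each direction; no further structural properties of KAN-algebras beyond that lemma are required.
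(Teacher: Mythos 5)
Your argument is correct and is exactly the one the paper intends: the lemma is stated as an immediate consequence of Lemma \ref{Prop2}, applying it in both directions and concluding by antisymmetry of the lattice order. No discrepancy to report.
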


\begin{lemma} \cite{Conrado-Miguel-Hernan}\label{lemma5.8}
Let $T \in {\bf{KAN}}$. Then $\neg(x \wedge y) = 1$ if and only if $\neg\neg x \leq \neg y$.
\end{lemma}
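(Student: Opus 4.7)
For the forward direction, the plan is to substitute the hypothesis $\neg(x \wedge y) = 1$ into axiom (N1), $\neg(x \wedge \neg(x \wedge y)) = \neg(x \wedge \neg y)$, which collapses the left-hand side to $\neg x$ and yields $\neg x = \neg(x \wedge \neg y)$. Applying $\neg$ to both sides gives $\neg\neg x = \neg\neg(x \wedge \neg y)$, and two applications of Proposition \ref{Prop1}(2) to $x \wedge \neg y \leq \neg y$ then deliver $\neg\neg(x \wedge \neg y) \leq \neg\neg\neg y$. It only remains to show $\neg\neg\neg y \leq \neg y$, which follows from Proposition \ref{Prop1}(7) applied at $u = \neg y$ (giving $\neg\neg\neg y \leq \square\neg y$) together with $\square\neg y = \neg\lozenge y = \neg y$ from Proposition \ref{Prop1}(8).

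For the backward direction, the strategy is different: rewrite $\neg(x \wedge y)$ in terms of the ``sharp'' elements $\lozenge x$ and $\lozenge y$, and bound the resulting meet above by $x \wedge \neg x$, so that the contradiction identity $\neg(x \wedge \neg x) = 1$ of Proposition \ref{Prop1}(3) can be transported back. Two applications of axiom (N5), exploiting commutativity of $\wedge$, give $\neg(x \wedge y) = \neg(\lozenge x \wedge \lozenge y)$. The hypothesis $\neg\neg x \leq \neg y$ becomes, via order-reversal of $\sim$, the inequality $\lozenge y \leq \lozenge \neg x$, whence $\lozenge x \wedge \lozenge y \leq \lozenge x \wedge \lozenge \neg x \leq x \wedge \neg x$ by Proposition \ref{Prop1}(6). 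A final application of Proposition \ref{Prop1}(2) combined with Proposition \ref{Prop1}(3) then yields $1 = \neg(x \wedge \neg x) \leq \neg(\lozenge x \wedge \lozenge y) = \neg(x \wedge y)$, as desired.

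The main obstacle I anticipate lies in the backward direction: one must recognize that axiom (N5) allows the replacement of both arguments of $\neg(\cdot \wedge \cdot)$ by their $\lozenge$-counterparts, and that the hypothesis $\neg\neg x \leq \neg y$ is most usefully rewritten as $\lozenge y \leq \lozenge\neg x$. With these two moves the basic contradiction principle of Proposition \ref{Prop1}(3) can be transported to the sharper inequality $\lozenge x \wedge \lozenge y \leq x \wedge \neg x$; once this bridge is in place, both directions reduce to routine monotonicity arguments using Proposition \ref{Prop1}.
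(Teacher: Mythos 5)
Your proof is correct, but note that the paper itself offers no argument to compare against: Lemma \ref{lemma5.8} is simply quoted from \cite{Conrado-Miguel-Hernan}, so what you have produced is an independent derivation from the KAN axioms and Proposition \ref{Prop1} as stated earlier in the paper. Both directions check out. In the forward direction, substituting $\neg(x\wedge y)=1$ into (N1) indeed gives $\neg x=\neg(x\wedge\neg y)$, the double antitonicity step yields $\neg\neg(x\wedge\neg y)\leq\neg\neg\neg y$, and $\neg\neg\neg y\leq\square\neg y=\neg y$ follows from items (7) and (8) of Proposition \ref{Prop1}, so $\neg\neg x\leq\neg y$. In the backward direction, your two uses of (N5) (with commutativity of $\wedge$) to get $\neg(x\wedge y)=\neg(\lozenge x\wedge\lozenge y)$ mirror exactly the manipulation the paper itself performs in proving Proposition \ref{Prop_3}(3), the rewriting of the hypothesis as $\lozenge y\leq\lozenge\neg x$ is just antitonicity of $\sim$, and the chain $\lozenge x\wedge\lozenge y\leq x\wedge\neg x$ via Proposition \ref{Prop1}(6) followed by Proposition \ref{Prop1}(3) transports $1=\neg(x\wedge\neg x)$ to $\neg(x\wedge y)$. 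The only caveat worth keeping in mind is that items (2), (3), (6), (7), (8) of Proposition \ref{Prop1} are themselves only cited, not proved, in this paper; your argument is complete relative to those stated facts, but a fully self-contained proof (or a check against the original source) would have to confirm that none of them is derived there from the very equivalence you are proving.
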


\begin{lemma}  \label{auxiliar1}
If $T \in {\bf{KAN}}$. Then $\lozenge(x \wedge y) = 0$ if and only if $\lozenge x \leq \lozenge \neg y$.
\end{lemma}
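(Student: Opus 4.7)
The plan is to reduce the statement directly to Lemma \ref{lemma5.8} by unfolding the definition $\lozenge x = \sim \neg x$ and using that $\sim$ is an order-reversing involution with $\sim 0 = 1$.

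First, I would rewrite the left-hand condition. Since $\lozenge(x \wedge y) = \sim \neg(x \wedge y)$ and $\sim$ is an involution satisfying $\sim 0 = 1$, one has $\lozenge(x \wedge y) = 0$ if and only if $\neg(x \wedge y) = 1$. By the commutativity of $\wedge$ and Lemma \ref{lemma5.8} applied with the roles of $x$ and $y$ exchanged, this is equivalent to $\neg \neg y \leq \neg x$.

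Next, I would rewrite the right-hand condition in the same spirit. Using $\lozenge x = \sim \neg x$ and $\lozenge \neg y = \sim \neg \neg y$, and the fact that $x \leq y$ iff $\sim y \leq \sim x$ in a Kleene algebra, we obtain that $\lozenge x \leq \lozenge \neg y$ is equivalent to $\sim \neg x \leq \sim \neg \neg y$, and hence to $\neg \neg y \leq \neg x$. Combining the two equivalences yields the claim.

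I do not anticipate a substantive obstacle here: the proof is essentially a bookkeeping reduction to Lemma \ref{lemma5.8} together with the elementary Kleene-algebra facts about $\sim$ (items (1) and (3) in the list following (K1)--(K3)). The only mild subtlety is the need to swap $x$ and $y$ in Lemma \ref{lemma5.8}, which is harmless because $x \wedge y = y \wedge x$.
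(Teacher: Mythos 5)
Your proof is correct and is essentially the paper's own argument: both unfold $\lozenge = \sim\neg$, reduce $\lozenge(x\wedge y)=0$ to $\neg(y\wedge x)=1$, apply Lemma \ref{lemma5.8} to get $\neg\neg y \leq \neg x$, and then translate back through the order-reversing involution $\sim$ to obtain $\lozenge x \leq \lozenge\neg y$. No gaps; the swap of $x$ and $y$ is handled the same way in the paper.
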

\begin{proof} 
By Lemma \ref{lemma5.8} we have that
\[
\begin{array}{lllllll}
\lozenge(x \wedge y) = 0  & \Longleftrightarrow & \neg(y \wedge x) = 1                   & \Longleftrightarrow & \neg \neg y \leq \neg x  \\
                                     & \Longleftrightarrow & \sim \neg x \leq \sim \neg \neg y  & \Longleftrightarrow & \lozenge x \leq \lozenge \neg y, \\
\end{array}
\]
i.e., $\lozenge(x \wedge y) = 0$ if and only if $\lozenge x \leq \lozenge \neg y$.
\end{proof}

\begin{lemma} \label{t}
Let $ T \in \mathbf{KAN} $. Then $\lozenge(x \wedge \sim x) = 0 $.
\end{lemma}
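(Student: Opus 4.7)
The plan is to reduce the claim to Proposition \ref{Prop1}(3) by unwinding the definition of $\lozenge$. Recall that $\lozenge z := \sim \neg z$, so the goal $\lozenge(x \wedge \sim x) = 0$ is equivalent to showing $\sim \neg(x \wedge \sim x) = 0$. Since in any Kleene algebra $\sim$ is an involution sending $1$ to $0$, it suffices to prove that $\neg(x \wedge \sim x) = 1$.

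But this last identity is exactly the content of Proposition \ref{Prop1}(3), which is already available in the paper. Therefore the proof should be a two-line calculation: start from Proposition \ref{Prop1}(3), apply $\sim$ to both sides using $\sim 1 = 0$, and invoke the definition of $\lozenge$ in \eqref{Delta}.

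There is really no obstacle here; the only thing to be careful about is to cite \eqref{Delta} for the definition of $\lozenge$ and the Kleene property $\sim 1 = 0$ (listed right after the axioms (K1)--(K3)), so that the short derivation is fully justified. An alternative route through Lemma \ref{auxiliar1} (taking $y = \sim x$ and then needing $\lozenge x \leq \lozenge \neg \sim x$) would require more work, so the direct route via Proposition \ref{Prop1}(3) is clearly preferable.
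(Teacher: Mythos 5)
Your proof is correct and all of its ingredients are available at this point in the paper: by \eqref{Delta} and Proposition \ref{Prop1}(3) one has $\lozenge(x \wedge \sim x) = \sim\neg(x \wedge \sim x) = \sim 1 = 0$, using only the definition of $\lozenge$ and the Kleene identity $\sim 1 = 0$. It is, however, not the route the paper takes. The paper's proof derives the lemma from Lemma \ref{auxiliar1} together with condition (N3): one rewrites $x \wedge \sim x$ as $x \wedge \neg x$ via (N3) and then applies the equivalence $\lozenge(x \wedge y) = 0 \Longleftrightarrow \lozenge x \leq \lozenge\neg y$, which itself rests on Lemma \ref{lemma5.8} and, to close the argument, still needs an inequality such as $\lozenge x \leq \lozenge\neg\neg x$ extracted from Proposition \ref{Prop1}; so the route you dismiss as requiring more work is essentially the paper's own argument. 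Your direct appeal to Proposition \ref{Prop1}(3) is the more economical derivation, since it bypasses the pseudocomplementation-style criterion entirely and reduces the lemma to a one-line consequence of the identity $\neg(x \wedge \sim x) = 1$; what the paper's phrasing buys instead is a reuse of the $\lozenge(x \wedge y) = 0$ characterization it has just established and exploits again later (e.g.\ in Lemma \ref{lemma2.23} and Theorem \ref{teorema2.11.}). Mathematically the two arguments are equally valid.
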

\begin{proof}
This follows from Lemma \ref{auxiliar1} and condition (N3).
\end{proof}

\begin{lemma} \label{lemma2.23}
Let $T \in {\bf{KAN}}$. If $\lozenge x = x$, $\lozenge y = y$ and $\lozenge (x \wedge y) = 0$, then $x \leq \sim y$.
\end{lemma}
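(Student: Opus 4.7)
The plan is to chain together Lemma \ref{auxiliar1} with the hypothesis $\lozenge(x \wedge y) = 0$ and then massage the resulting inequality using Proposition \ref{Prop1}. The key observation is that $\lozenge \neg y$ unpacks, by definition \eqref{Delta}, as $\sim \neg \neg y$, which differs from the target $\sim y$ only by a double intuitionistic negation; and $\neg\neg y$ can be compared to $y$ via item (7) of Proposition \ref{Prop1}.

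Concretely, I would proceed as follows. First, apply Lemma \ref{auxiliar1} to the hypothesis $\lozenge(x \wedge y) = 0$ to obtain $\lozenge x \leq \lozenge \neg y$. Using the hypothesis $\lozenge x = x$ and the definition $\lozenge \neg y = \sim \neg \neg y$, this rewrites as
\[
x \leq \sim \neg \neg y.
\]
Next, I would invoke Proposition \ref{Prop1}(7), which gives $\lozenge y \leq \neg \neg y$. Combining this with the hypothesis $\lozenge y = y$ yields $y \leq \neg \neg y$, and applying the order-reversing operation $\sim$ (a basic Kleene property listed right after (K1)--(K3)) gives $\sim \neg \neg y \leq \sim y$. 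Chaining the two inequalities produces $x \leq \sim \neg \neg y \leq \sim y$, as desired.

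There is no real obstacle here: once one recognizes that Lemma \ref{auxiliar1} is the natural translation of the hypothesis $\lozenge(x \wedge y) = 0$ into a $\lozenge$-inequality, the rest is a short manipulation using the standing identities for $\lozenge$, $\square$, and $\sim$. The only point requiring a little care is not to confuse $\neg\neg y$ with $y$ (they need not coincide in a KAN-algebra); one only needs the one-sided comparison $y \leq \neg\neg y$, which is precisely what Proposition \ref{Prop1}(7) supplies under the assumption $\lozenge y = y$.
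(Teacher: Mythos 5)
Your proof is correct and follows essentially the same route as the paper: the paper applies Lemma \ref{lemma5.8} directly (obtaining $\neg\neg x\leq\neg y$, hence $\square x\leq\sim y$, and finishes with $x\leq\square x$ from Proposition \ref{Prop1}(6)), while you invoke its $\lozenge$-reformulation, Lemma \ref{auxiliar1}, and close the chain with $\lozenge y\leq\neg\neg y$ from Proposition \ref{Prop1}(7). The two arguments are $\sim$-dual presentations of the same computation, and your explicit caveat that only $y\leq\neg\neg y$ (not equality) is needed, supplied by the hypothesis $\lozenge y=y$, is exactly the right point of care.
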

\begin{proof} 
Let $x, y \in T$ be such that $\lozenge x = x$, $\lozenge y = y$ and $\lozenge (x \wedge y) = 0$. It follows $\neg x = \sim x$, $\neg y = \sim y$ and $\neg (x \wedge y) = 1$. From this last statement and Lemma \ref{lemma5.8} we obtain $\neg \neg x \leq \neg y$. So, $\neg \sim x \leq \sim y$ and by item (6) of Proposition \ref{Prop1}, we conclude that $x \leq \sim y$, as required.  
\end{proof}

The next results will be useful to us.

\begin{proposition} \label{Prop_3}
Let $T \in {\bf{KAN}}$. Then:
\begin{enumerate}
\item $\lozenge \lozenge x = \lozenge x$,
\item $\lozenge (x \vee y) = \lozenge x \vee \lozenge y$,
\item $\lozenge (\lozenge x \wedge \lozenge y) = \lozenge (x \wedge y)$,
\item $\square \square x = \square x$,
\item $\square (x \wedge y) = \square x \wedge \square y$,
\item $\square (\square x \vee \square y) = \square (x \vee y)$,
\item $x \vee \sim x = \sim x \vee \lozenge x$,
\item $x \wedge \sim x = x \wedge \square \sim x$,
\item $x = (\lozenge x \vee \sim x) \wedge \square x$.
\end{enumerate}
\end{proposition}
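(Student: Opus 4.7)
The plan is to prove the nine items in an order that lets each leverage those proved before, with the identities $\lozenge x=\sim\neg x$, $\square x=\neg\sim x$, $\sim\lozenge x=\neg x$, and $\sim\square x=\lozenge\sim x$ used freely, together with De Morgan for $\sim$ (Kleene) and for $\neg$ (axiom (N2)), the absorption $\neg\lozenge x=\neg x=\square\neg x$ from Proposition \ref{Prop1}(8), axioms (N3) and (N5), and Moisil's determination principle (Lemma \ref{MDP}) as the main workhorse for the final three identities.

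Items (1), (2), (4), (5) should fall out by direct expansion. For (1), $\lozenge\lozenge x=\sim\neg\lozenge x=\sim\neg x=\lozenge x$ by Proposition \ref{Prop1}(8); for (4), dually, $\square\square x=\neg\sim\square x=\neg\lozenge\sim x=\neg\sim x=\square x$. Items (2) and (5) are immediate from $\sim$-De Morgan and (N2): for instance $\lozenge(x\vee y)=\sim\neg(x\vee y)=\sim(\neg x\wedge\neg y)=\sim\neg x\vee\sim\neg y$.

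For (3), apply (N5) twice. First $\neg(x\wedge y)=\neg(\sim\neg x\wedge y)=\neg(\lozenge x\wedge y)$; then commuting and applying (N5) to the second coordinate yields $\neg(\lozenge x\wedge y)=\neg(\lozenge x\wedge\lozenge y)$. Taking $\sim$ of both sides gives (3). Item (6) is then obtained by dualization:
\[
\square(\square x\vee\square y)=\sim\lozenge\sim(\square x\vee\square y)=\sim\lozenge(\lozenge\sim x\wedge\lozenge\sim y)\stackrel{(3)}{=}\sim\lozenge(\sim x\wedge\sim y)=\square(x\vee y).
\]

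The last three items I would prove via Moisil's principle. Item (8) collapses, since $\square\sim x=\neg\sim\sim x=\neg x$, so (8) is literally (N3). For (7), item (2) and the already proved (1) give $\lozenge(x\vee\sim x)=\lozenge x\vee\lozenge\sim x=\lozenge(\sim x\vee\lozenge x)$; meanwhile
\[
\square(x\vee\sim x)=\neg(\sim x\wedge x)=1\quad\text{and}\quad\square(\sim x\vee\lozenge x)=\neg(x\wedge\sim\lozenge x)=\neg(x\wedge\neg x)=1
\]
by Proposition \ref{Prop1}(3), so both sides agree on $\lozenge$ and $\square$. For (9), set $R=(\lozenge x\vee\sim x)\wedge\square x$ and use distributivity: $R=(\lozenge x\wedge\square x)\vee(\sim x\wedge\square x)$. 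By Proposition \ref{Prop1}(6) the first disjunct is $\lozenge x$; by the reduction $\square\sim(-)=\neg\sim\sim(-)=\neg(-)$, (N3) gives $\sim x\wedge\square x=\sim x\wedge\neg\sim x=x\wedge\sim x$. Hence $R=\lozenge x\vee(x\wedge\sim x)$. Then $\lozenge R=\lozenge\lozenge x\vee\lozenge(x\wedge\sim x)=\lozenge x\vee 0=\lozenge x$ by (1), (2) and Lemma \ref{t}; while using (4), (5) and the computation $\square(\lozenge x\vee\sim x)=\neg(x\wedge\sim\lozenge x)=\neg(x\wedge\neg x)=1$ again from Proposition \ref{Prop1}(3), one gets $\square R=1\wedge\square x=\square x$. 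Moisil's principle delivers $R=x$.

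The main obstacle is bookkeeping rather than depth: item (3) requires applying (N5) on both arguments (a symmetry it is not stated with), and item (9) needs the right rewriting of $R$ so that both $\lozenge R$ and $\square R$ can be pushed through sums and products cleanly. Everything else is essentially dualization and substitution.
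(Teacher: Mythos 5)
Your proof is correct, and for items (1)--(5) and (8) it coincides with the paper's own computations (direct expansion via $\lozenge x=\sim\neg x$, $\square x=\neg\sim x$, (N2), and Proposition \ref{Prop1}(8); the paper's proof of (3) also tacitly uses commutativity of $\wedge$ to apply (N5) in the second argument, just as you spell out). Where you diverge is in (6), (7) and (9). For (6) you dualize item (3) through $\square z=\sim\lozenge\sim z$, whereas the paper reruns the (N5) computation directly; this is a harmless economy. For (7) and (9) the paper argues purely equationally: (7) is the one-liner $x\vee\sim x=\sim(x\wedge\sim x)=\sim(x\wedge\neg x)=\sim x\vee\lozenge x$ via (N3), and (9) follows from (7), (8) and $x\le\square x$ by distributivity and absorption, with no appeal to Lemma \ref{MDP}. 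You instead verify that both sides of (7) and (9) have the same $\lozenge$- and $\square$-images (using Lemma \ref{t} for $\lozenge(x\wedge\sim x)=0$ and Proposition \ref{Prop1}(3) for the $\square$-computations) and invoke Moisil's determination principle; since Lemma \ref{MDP} and Lemma \ref{t} are established before this proposition, there is no circularity, and your argument is sound. The paper's route is shorter and yields (9) as a corollary of (7)--(8), while yours is more uniform and mechanical, at the cost of invoking the determination principle where a direct rewrite suffices. One cosmetic slip: in (9) the step $\sim x\wedge\square x=\sim x\wedge\neg\sim x=x\wedge\sim x$ is justified by (N3) applied to $\sim x$ (equivalently, item (8) at $\sim x$), not by the stated reduction $\square\sim(-)=\neg(-)$; the equality itself is fine.
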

\begin{proof}
(1): By (8) of Proposition \ref{Prop1} we have $\lozenge \lozenge x = \sim \neg \lozenge x = \sim \neg x = \lozenge x$.

(2): By (N2) we have 
\[
\begin{array}{lllllll}
\lozenge(x \vee y)  & = & \sim \neg (x \vee y) & = & \sim (\neg x \wedge \neg y) \\
                          & = & \sim \neg x \vee \sim \neg y & = & \lozenge x \vee \lozenge y. \\
\end{array}
\]

(3): By (N5),
\[
\begin{array}{lllllll}
\lozenge (\lozenge x \wedge \lozenge y)  & = & \sim \neg ( \sim \neg x \wedge \sim \neg y) & = & \sim \neg (x \wedge \sim \neg y) \\
                                                     & = & \sim \neg (x \wedge y) & = & \lozenge (x \wedge y). \\
\end{array}
\]

(4): By (8) of Proposition \ref{Prop1} it follows $\square \square x = \square \neg \sim x = \neg \sim x = \square x$.

(5): By (N2) we have 
\[
\begin{array}{lllllll}
\square (x \wedge y)  & = & \neg \sim (x \wedge y)                  & = & \neg ( \sim x \vee \sim y) \\
                                & = & \neg \sim x \wedge \neg \sim y     & = & \square x \wedge \square y. \\
\end{array}
\]

(6): Then, by (N5)  
\[
\begin{array}{lllllll}
\square (\square x \vee \square y)  & = & \neg \sim (\neg \sim x \vee \neg \sim y) \\
                                                 & = & \neg (\sim \neg \sim x \wedge \sim \neg \sim y) \\
                                                 & = & \neg (\sim x \wedge \sim \neg \sim y) \\
                                                 & = & \neg (\sim x \wedge \sim y) \\
                                                 & = & \neg \sim (x \vee y) \\
                                                 & = & \square (x \vee y). \\
\end{array}
\]

(7): By (N3),  
\[
\begin{array}{lllllll}
x \vee \sim x  & = & \sim \sim x \vee \sim x    & = & \sim (x \wedge \sim x) \\
                      & = & \sim (x \wedge \neg x)    & = & \sim x \vee \lozenge x. \\
\end{array}
\]

(8): It follows by (N3) and Definition \ref{nabla}.

(9): By items (7), (8) and Proposition \ref{Prop1} we have 
\[
\begin{array}{lllllll}
(\lozenge x \vee \sim x) \wedge \square x  & = & (x \vee \sim x) \wedge \square x   \\
                                                             & = & (x \wedge \square x) \vee (\sim x \wedge \square x) \\
                                                             & = & x \vee (\sim x \wedge \square x) \\
                                                             & = & x \vee (\sim x \wedge x) \\
                                                             & = & x. \\
\end{array}
\]
This concludes the proof.
\end{proof}

The following result lists some properties that hold in centered KAN-algebras.

\begin{lemma} \label{auxiliar2} 
Let $T \in {\bf{KANc}}$. Then:
\begin{enumerate}
\item $\neg c=1$,
\item $\lozenge c = 0$ and $\square c = 1$,
\item $\lozenge(c \wedge \square x) = 0$ and $\square(\lozenge x \vee c) = 1$,
\item $x = (\lozenge x \vee c) \wedge \square x$,
\item $x \vee c = \lozenge x \vee c$.
\end{enumerate}
\end{lemma}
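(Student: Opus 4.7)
For parts (1) and (2), the key input is the defining equation of the center, $\sim c = c$, together with Lemma~\ref{t}. Substituting $x = c$ in Lemma~\ref{t} gives $\lozenge c = \lozenge(c \wedge \sim c) = 0$, which already establishes the first half of (2); dualizing via $\square c = \sim \lozenge \sim c$ immediately yields $\square c = \sim 0 = 1$. For (1), since $\lozenge c = \sim \neg c = 0$, applying $\sim$ gives $\neg c = 1$.

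Part (3) just uses monotonicity. Proposition~\ref{Prop_3}(2) implies that $\lozenge$ preserves order (write $y$ as $x \vee y$ when $x \leq y$), and similarly Proposition~\ref{Prop_3}(5) implies $\square$ is monotone. Then $c \wedge \square x \leq c$ forces $\lozenge(c \wedge \square x) \leq \lozenge c = 0$, while $c \leq \lozenge x \vee c$ forces $\square(\lozenge x \vee c) \geq \square c = 1$.

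Parts (4) and (5) are the real content, and my plan is to use Moisil's determination principle (Lemma~\ref{MDP}): it suffices to show that $\lozenge$ and $\square$ agree on both sides. For (5), compute $\lozenge(x \vee c) = \lozenge x \vee \lozenge c = \lozenge x$ and $\lozenge(\lozenge x \vee c) = \lozenge \lozenge x \vee \lozenge c = \lozenge x$ using Proposition~\ref{Prop_3}(1),(2) together with $\lozenge c = 0$; for $\square$, use Proposition~\ref{Prop_3}(6) to get $\square(x \vee c) = \square(\square x \vee \square c) = \square(\square x \vee 1) = 1$, and the $\square$ side of (3) gives $\square(\lozenge x \vee c) = 1$ as well, so Moisil closes the case.

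For (4), the same strategy applies. On the $\square$ side, Proposition~\ref{Prop_3}(4),(5) and the $\square$ part of (3) give
\[
\square\bigl((\lozenge x \vee c) \wedge \square x\bigr) = \square(\lozenge x \vee c) \wedge \square \square x = 1 \wedge \square x = \square x.
\]
On the $\lozenge$ side, Proposition~\ref{Prop_3}(3) rewrites $\lozenge\bigl((\lozenge x \vee c) \wedge \square x\bigr) = \lozenge\bigl(\lozenge(\lozenge x \vee c) \wedge \lozenge \square x\bigr) = \lozenge(\lozenge x \wedge \lozenge \square x)$; the main small obstacle here is noticing that $\lozenge x \leq \square x$ (from Proposition~\ref{Prop1}(6)) forces $\lozenge x = \lozenge \lozenge x \leq \lozenge \square x$ by monotonicity, so the meet collapses to $\lozenge x$, and Proposition~\ref{Prop_3}(1) gives $\lozenge \lozenge x = \lozenge x$. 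Both sides match $x$ under $\lozenge$ and $\square$, so Lemma~\ref{MDP} finishes the proof. The only nonroutine point in the whole argument is the monotonicity observation about $\lozenge \square x$ that reduces the meet in the $\lozenge$-computation for (4).
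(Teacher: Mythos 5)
Your proof is correct, and for the central item (4) it follows the same strategy as the paper: reduce to Moisil's determination principle (Lemma~\ref{MDP}) by checking that $\lozenge$ and $\square$ of $(\lozenge x \vee c)\wedge \square x$ give back $\lozenge x$ and $\square x$. The differences are in the surrounding items. For (1)--(2) the paper simply quotes $\neg c = 1$ from \cite{Conrado-Miguel-Hernan} and then computes $\lozenge c = \sim\neg c = 0$, $\square c = \neg\sim c = 1$; you instead get $\lozenge c = \lozenge(c\wedge\sim c) = 0$ from Lemma~\ref{t} and the centre equation, and recover $\square c = 1$ and $\neg c = 1$ by applying $\sim$, which makes these items self-contained rather than resting on the cited reference. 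In (3) you derive monotonicity of $\lozenge,\square$ from Proposition~\ref{Prop_3}(2),(5), where the paper invokes Lemma~\ref{Prop2}; this is only a cosmetic difference. In (4) your $\lozenge$-computation goes through Proposition~\ref{Prop_3}(3) plus the observation $\lozenge x \le \lozenge\square x$ (from $\lozenge x \le \square x$ and monotonicity), whereas the paper distributes inside $\lozenge$ and uses item (3) to kill the term $\lozenge(c\wedge\square x)$; both are equally short. Finally, for (5) the paper derives $x\vee c = \lozenge x \vee c$ from (4) by lattice distributivity and $\lozenge x\le\square x$, while you give an independent Moisil argument ($\lozenge$ of both sides is $\lozenge x$, $\square$ of both sides is $1$ via Proposition~\ref{Prop_3}(6) and item (3)); your version buys independence of the items, the paper's buys a slightly shorter chain. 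All steps you use are available before Lemma~\ref{auxiliar2} in the paper, so there is no circularity.
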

\begin{proof}
(1): The proof can be found in \cite{Conrado-Miguel-Hernan}. 

(2): By item (1) and the definitions \ref{Delta} and \ref{nabla} we have 
\[
\lozenge c = \sim \neg c = \sim 1 = 0 \text{ and } \square c = \neg \sim c = \neg c = 1.
\]

(3): Since $c \wedge \lozenge x \leq c$ and $c \leq \lozenge x \vee c$, it follows from Lemma \ref{Prop2} that $\lozenge (c \wedge \square x) \leq \lozenge c$ and $\square c \leq \square (\lozenge x \vee c)$. Then, by item (2), we have $\lozenge (c \wedge \square x) = 0$ and $\square (\lozenge x \vee c) = 1$, which proves (3). 

(4): We will use the Moisil's determination principle. We show $\lozenge x = \lozenge ((\lozenge x \vee c) \wedge \square x)$ and $\square x = \square ((\lozenge x \vee c) \wedge \lozenge x)$. By Propositions \ref{Prop1}, \ref{Prop_3} and item (3) we have
\[
\begin{array}{lllllll}
\lozenge ((\lozenge x \vee c) \wedge \square x)  & = & \lozenge ((\lozenge x \wedge \square x) \vee (c \wedge \square x)) \\
                                                                  & = & \lozenge (\lozenge x \vee (c \wedge \square x)) \\
                                                                  & = & \lozenge \lozenge x \vee \lozenge (c \wedge \square x) \\
                                                                  & = & \lozenge x \\
\end{array}
\]
and $\square ((\lozenge x \vee c) \wedge \square x)  =  \square (\lozenge x \vee c) \wedge \square \square x  =  \square x$. Therefore, $x = (\lozenge x \vee c) \wedge \square x$.

(5): By Proposition \ref{Prop1} we have $\lozenge x \leq \square x$. Thus, by item (4), 
\[
\begin{array}{lllllll}
x \vee c  & = & ((\lozenge x \vee c) \wedge \square x) \vee c & = & (\lozenge x \vee c) \wedge (\square x \vee c) \\
              & = & (\lozenge x \wedge \square x) \vee c              & = & \lozenge x \vee c, \\
\end{array}
\]
i.e., $x \vee c = \lozenge x \vee c$ as desired. 
\end{proof}

\begin{lemma} \label{lemacentro}  
Let $T \in {\bf{KAN}}$ and $c \in T$. Then $c$ is the center of $T$ if and only if $\lozenge c = 0$ and $\square c = 1$.
\end{lemma}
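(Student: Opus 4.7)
The plan is to exploit Moisil's determination principle (Lemma \ref{MDP}) for the backward direction and Lemma \ref{t} for the forward direction, relying on the identities $\lozenge x = \sim \square \sim x$ and $\square x = \sim \lozenge \sim x$ noted after equation (\ref{nabla}).

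For the forward direction, assume $\sim c = c$. Then $c \wedge \sim c = c$, so applying Lemma \ref{t} with $x = c$ gives $\lozenge c = \lozenge (c \wedge \sim c) = 0$. For the other equality, I would compute $\square c = \sim \lozenge \sim c = \sim \lozenge c = \sim 0 = 1$, using $\sim c = c$ and the identity linking $\square$ and $\lozenge$.

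For the backward direction, assume $\lozenge c = 0$ and $\square c = 1$. I want to conclude $\sim c = c$ via Moisil's determination principle, so it suffices to verify $\lozenge (\sim c) = \lozenge c$ and $\square (\sim c) = \square c$. Using $\lozenge \sim x = \sim \square x$ I obtain $\lozenge (\sim c) = \sim \square c = \sim 1 = 0 = \lozenge c$; similarly, using $\square \sim x = \sim \lozenge x$, I get $\square (\sim c) = \sim \lozenge c = \sim 0 = 1 = \square c$. Then Lemma \ref{MDP} yields $\sim c = c$, i.e., $c$ is the center of $T$.

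No serious obstacle is anticipated: both directions are short manipulations once the right tools are chosen. The only point requiring a little care is making sure the chain of identities between $\lozenge$, $\square$ and $\sim$ is applied in the correct order, and that the forward direction does not silently assume centeredness beyond the hypothesis $\sim c = c$ (Lemma \ref{t} is available for every KAN-algebra, so this is fine).
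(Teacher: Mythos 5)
Your proof is correct, but it follows a genuinely different route from the paper's. For the forward direction the paper simply invokes Lemma~\ref{auxiliar2}(2), whose key ingredient $\neg c = 1$ is imported from the cited reference, and computes $\lozenge c = \sim\neg c = 0$, $\square c = \neg\sim c = 1$; you instead get $\lozenge c = \lozenge(c\wedge\sim c) = 0$ from Lemma~\ref{t} together with $\sim c = c$, and then $\square c = \sim\lozenge\sim c = \sim 0 = 1$ by the duality noted after~(\ref{nabla}), which has the advantage of being self-contained in the paper's own proved lemmas and not leaning on the external fact $\neg c = 1$. For the converse the paper uses the decomposition $x = (\lozenge x \vee \sim x)\wedge\square x$ of Proposition~\ref{Prop_3}(9) to obtain $c = (0\vee\sim c)\wedge 1 = \sim c$ in one line, whereas you compute $\lozenge\sim c = \sim\square c = 0 = \lozenge c$ and $\square\sim c = \sim\lozenge c = 1 = \square c$ and then apply Moisil's determination principle (Lemma~\ref{MDP}) to conclude $\sim c = c$; both arguments are valid and of comparable length, the paper's being slightly more direct, yours trading the structural identity (9) for the duality identities plus Lemma~\ref{MDP}.
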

\begin{proof}
If $c$ is the center of $T$, by Proposition \ref{auxiliar2} we have $\lozenge c=0$ and $\square c=1$. Conversely, if we suppose $\lozenge c=0$ and $\square c=1$, then by Proposition \ref{Prop_3} we have $c=(\lozenge c \vee \sim c) \wedge \square c = (0 \vee \sim c) \wedge 1 = \sim c$ and $c$ is the center of $T$.
\end{proof}

\begin{remark}
A \emph{modal De Morgan algebra} is a pair $\langle A,\square \rangle$, where $A$ is a De Morgan algebra and $\square$ is a unary operator satisfying 
\[
\square 1 = 1 
\quad\text{and}\quad 
\square(x \wedge y) = \square x \wedge \square y.
\]
By defining $\lozenge x := \sim\square\sim x$, one may equivalently describe a modal De Morgan algebra as a pair $\langle A, \lozenge \rangle$ satisfying 
\[
\lozenge 0 = 0
\quad\text{and}\quad
\lozenge(x \vee y) = \lozenge x \vee \lozenge y.
\]
These operators are typically referred to as \emph{normal modal operators} (see \cite{Celani2011,Gregori2019}). Moreover, based on the results of Propositions~\ref{Prop1} and \ref{Prop_3}, we can conclude that $\lozenge$ and $\square$ are normal modal operators.
\end{remark}

\section{Stone KAN-algebras} \label{sec3}

In this section, we define the variety of Stone-Kleene algebras with intuitionistic negation and explore their relationship with the variety of Stone algebras.

\begin{definition} 
Let $T \in {\bf{KAN}}$. We say that $T$ is a {\it{Stone-Kleene algebra with intuitionistic negation}}, or a {\it{Stone KAN-algebra}}, if it satisfies the equation
\begin{equation} \label{stoneKAN}
\neg x \vee \neg \neg x = 1  
\end{equation}
for all $x \in A$. A Stone KAN-algebra $T$ is called a {\it{centered Stone KAN-algebra}}, or {\it{Stone KANc-algebra}}, if $T$ is a centered KAN-algebra with center $c$.
\end{definition}

Denote the variety of Stone KAN-algebras by $\bf{SKAN}$ and the variety of centered Stone KAN-algebras by $\bf{SKANc}$.

\begin{example}
Let $T$ be the KAN-algebra from Example \ref{noStoneKAN}. Since $\neg\neg b \vee \neg b \neq 1$, $T$ is not a Stone KAN-algebra. Thus, the variety ${\bf{SKAN}}$ is a proper subvariety of ${\bf{KAN}}$.
\end{example}

\begin{example} \label{cadena3}
Consider the three-element chain $T = \{0, c, 1\}$ represented by the following Hasse diagram: 
\vspace{0.25cm}
\begin{center}
\hspace{0.25cm}
\put(00,10){\makebox(1,1){$\bullet$}}
\put(00,40){\makebox(1,1){$\bullet$}}
\put(00,70){\makebox(1,1){$\bullet$}}
\put(00,10){\line(0,0){30}}
\put(00,40){\line(0,0){30}}
\put(10,10){\makebox(2,2){$ 0$}} 
\put(10,40){\makebox(2,2){$c$}}
\put(10,70){\makebox(2,2){$1$}}
\end{center}
Define the unary operators $\sim$ and $\neg$ as follows
\vspace{0.25cm}
\begin{center}
\begin{tabular}{|c|c|c|c|}\hline
$x$           & $0$ & $c$ & $1$ \\ \hline
$\sim x$   & $1$ & $c$ & $0$ \\ \hline
$\neg x$   & $1$ & $1$ & $0$ \\ \hline
\end{tabular}
\end{center}
\vspace{0.25cm}
It is easy so see that $\langle T, \vee, \wedge, \sim, \neg, c,0, 1 \rangle$ is a centered Stone KAN-algebra.
\end{example}

The following example shows a Stone KAN-algebra that is not centered.

\begin{example} \label{SKANalgebra} 
Consider the bounded distributive lattice $T$ given by the following Hasse diagram:
\vspace{0.25cm}
\begin{center}
\hspace{0.25cm}
\put(30,40){\makebox(1,1){$\bullet$}}
\put(-30,40){\makebox(1,1){$\bullet$}}
\put(00,10){\makebox(1,1){$\bullet$}}
\put(00,70){\makebox(1,1){$\bullet$}}
\put(00,100){\makebox(1,1){$\bullet$}}
\put(-30,130){\makebox(1,1){$\bullet$}}
\put(30,130){\makebox(1,1){$\bullet$}}
\put(00,160){\makebox(1,1){$\bullet$}}
\put(-30,40){\line(1,1){30}}
\put(30,40){\line(-1,1){30}}
\put(00,10){\line(1,1){30}}
\put(00,10){\line(-1,1){30}}
\put(00,70){\line(0,0){30}}
\put(00,100){\line(1,1){30}}
\put(00,100){\line(-1,1){30}}
\put(-30,130){\line(1,1){30}}
\put(30,130){\line(-1,1){30}}
\put(00,00){\makebox(2,2){$0$}}
\put(-40,40){\makebox(2,2){$a$}} 
\put(40,40){\makebox(2,2){$b$}} 
\put(10,70){\makebox(2,2){$c$}}
\put(10,100){\makebox(2,2){$d$}}
\put(-40,130){\makebox(2,2){$e$}} 
\put(40,130){\makebox(2,2){$f$}} 
\put(00,170){\makebox(2,2){$1$}}
\end{center}
\vspace{0.25cm}
We define the operators $\sim$ and $\neg$ as follows
\vspace{0.25cm}
\begin{center}
\begin{tabular}{|c|c|c|c|c|c|c|c|c|c|}\hline 
$x$          & $0$ & $a$ & $b$ & $c$ & $d$ & $e$ & $f$ & $1$ \\ \hline
$\sim x$  & $1$ & $f$  & $e$ & $d$ & $c$ & $b$ & $a$ & $0$ \\ \hline
$\neg x$  & $1$ & $1$ & $1$ & $1$ & $c$ & $b$ & $a$ & $0$ \\ \hline
\end{tabular}
\end{center}
\vspace{0.25cm}
Then it can be verified that $\langle T,\vee,\wedge,\neg,\sim,0,1 \rangle$ is a Stone KAN-algebra.
\end{example}

Let $T \in {\bf{KAN}}$. Consider the set $T^{\lozenge} := \{ x \in T \colon \lozenge x = x \}$ and define the following operations on $T^{\lozenge}$:
\begin{itemize}
\item $x \vee^{\lozenge} y := \lozenge (x \vee y)$,
\item $x \wedge^{\lozenge} y := \lozenge (x \wedge y)$,
\item $\neg^{\lozenge} x := \lozenge \neg x$.
\end{itemize}
By Proposition \ref{Prop_3} it follows that $x \vee^{\lozenge} y = x \vee y$. Note that if $x \in T^{\lozenge}$, then $\sim x = \neg x$ and hence $T^{\lozenge} = \{ x \in T \colon \sim x = \neg x \}$. Furthermore, if we consider the set $T^{\square} := \{ x \in T \colon \square x = x \}$, it is easy to see that $T^{\square} = \sim (T^{\lozenge})$.

\begin{theorem} \label{teorema2.11.}
Let $T \in {\bf{SKAN}}$. Then $\langle T^{\lozenge}, \vee, \wedge^{\lozenge}, \neg^{\lozenge}, 0, 1 \rangle$ is a Stone algebra.
\end{theorem}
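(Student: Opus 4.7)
The plan is to verify, in turn, closure of $T^{\lozenge}$ under the operations, the bounded distributive lattice axioms, the pseudocomplement property for $\neg^{\lozenge}$, and finally the Stone identity, using the fact that $\lozenge$ behaves as a normal modal operator on $T$ (Propositions \ref{Prop1} and \ref{Prop_3}).

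First, I would check closure and the constants. Items (4), (5) of Proposition \ref{Prop1} give $0, 1 \in T^{\lozenge}$. For $x, y \in T^{\lozenge}$, Proposition \ref{Prop_3}(2) together with $\lozenge \lozenge = \lozenge$ (item (1)) gives $x \vee^{\lozenge} y = \lozenge(x \vee y) = \lozenge x \vee \lozenge y = x \vee y \in T^{\lozenge}$, so $\vee^{\lozenge}$ coincides with $\vee$ on $T^{\lozenge}$. Closure of $T^{\lozenge}$ under $\wedge^{\lozenge}$ and $\neg^{\lozenge}$ is immediate from $\lozenge \lozenge = \lozenge$.

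Next, I would establish the bounded distributive lattice structure. Proposition \ref{Prop1}(6) yields $\lozenge(x \wedge y) \leq x \wedge y$, from which the inequalities $x \wedge^{\lozenge} y \leq x, y$ follow, and the universality of $\wedge^{\lozenge}$ as meet in $T^{\lozenge}$ follows because $z \leq x, z \leq y$ with $z = \lozenge z$ forces $z = \lozenge z \leq \lozenge(x \wedge y)$. Commutativity and associativity of $\wedge^{\lozenge}$ reduce to those of $\wedge$ via $\lozenge \lozenge = \lozenge$. The absorption laws follow from $\lozenge(x \wedge y) \leq x$ and $\lozenge(x \wedge (x \vee y)) = \lozenge x = x$. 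Distributivity is
\[
x \wedge^{\lozenge} (y \vee z) = \lozenge(x \wedge (y \vee z)) = \lozenge(x \wedge y) \vee \lozenge(x \wedge z) = (x \wedge^{\lozenge} y) \vee (x \wedge^{\lozenge} z),
\]
using Proposition \ref{Prop_3}(2).

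For the pseudocomplement, I would apply Lemma \ref{auxiliar1} directly: for $x, y \in T^{\lozenge}$, since $\lozenge y = y$,
\[
x \wedge^{\lozenge} y = 0 \iff \lozenge(y \wedge x) = 0 \iff \lozenge y \leq \lozenge \neg x \iff y \leq \neg^{\lozenge} x,
\]
showing that $\neg^{\lozenge} x$ is the pseudocomplement of $x$ in $T^{\lozenge}$. Finally, to verify the Stone identity, I would first compute $\neg^{\lozenge} \neg^{\lozenge} x = \lozenge \neg (\lozenge \neg x) = \lozenge \neg \neg x$ using Proposition \ref{Prop1}(8); then, using Proposition \ref{Prop_3}(2) and the Stone KAN-equation (\ref{stoneKAN}),
\[
\neg^{\lozenge} x \vee \neg^{\lozenge} \neg^{\lozenge} x = \lozenge \neg x \vee \lozenge \neg \neg x = \lozenge(\neg x \vee \neg \neg x) = \lozenge 1 = 1.
\]

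The only step requiring genuine care is the meet/pseudocomplement interplay, since $\wedge^{\lozenge}$ differs from the ambient $\wedge$; but once Proposition \ref{Prop1}(6) and Lemma \ref{auxiliar1} are at hand, everything reduces to modal-operator calculations. The rest is essentially a routine transfer of the lattice laws along the closure operator $\lozenge$.
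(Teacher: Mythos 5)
Your proposal is correct and follows essentially the same route as the paper: the bounded distributive lattice structure on $T^{\lozenge}$ via Proposition \ref{Prop_3}, the pseudocomplement property via Lemma \ref{auxiliar1}, and the Stone identity via Proposition \ref{Prop1}(8), $\lozenge$-additivity and equation \eqref{stoneKAN}, exactly as in the paper's (more condensed) argument. The extra detail you supply on the lattice axioms (e.g.\ identifying $\wedge^{\lozenge}$ as the infimum in $T^{\lozenge}$) is sound and only spells out what the paper leaves as routine.
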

\begin{proof}
By Lemma \ref{Prop_3} we can show that $ \langle T^{\lozenge}, \vee, \wedge^{\lozenge}, 0, 1 \rangle $ is a bounded distributive lattice. Furthermore, if $x,y \in T^{\lozenge}$, then $\lozenge x = x$ and by Lemma \ref{auxiliar1} we know that $x \wedge^{\lozenge} y = 0$ if and only if $x \leq \neg^{\lozenge} y$. Thus, $T^{\lozenge}$ is a distributive p-algebra. It remains to prove that $\neg^{\lozenge} x \vee \neg^{\lozenge} \neg^{\lozenge} x = 1$, for all $x \in T^{\lozenge}$. Since equation \ref{stoneKAN} holds, and by Propositions \ref{Prop1} and \ref{Prop_3}, we have
\[
\begin{array}{lllllll}
\neg^{\lozenge} x \vee \neg^{\lozenge} \neg^{\lozenge} x  & = & \lozenge \neg x \vee \lozenge \neg \lozenge \neg x \\
                                                                               & = & \lozenge \neg x \vee \lozenge \neg \neg x \\
                                                                              & = & \lozenge(\neg x \vee \neg \neg x)  \\
                                                                             & = & 1. \\
\end{array}
\]
Therefore, $T^{\lozenge}$ is a Stone algebra.
\end{proof}

\begin{proposition}
Let $T_1, T_2 \in {\bf SKAN}$. If $f \colon T_{1} \to T_{2}$ is a morphism in {\bf{SKAN}}, then $f^{\lozenge} \colon T_{1}^{\lozenge} \to T_{2}^{\lozenge}$ defined by $f^{\lozenge}(x) = \lozenge f(x)$ is a morphism in {\bf{Stone}}.
\end{proposition}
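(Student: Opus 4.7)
The plan is to check, in turn, that $f^{\lozenge}$ lands in $T_2^{\lozenge}$ and that it preserves each operation of the Stone-algebra structure on $T^{\lozenge}$ described in Theorem~\ref{teorema2.11.}, namely $0$, $1$, $\vee$, $\wedge^{\lozenge}$, and $\neg^{\lozenge}$. The single observation that drives everything is that $f$, being an $\mathbf{SKAN}$-morphism, preserves $\sim$ and $\neg$, and therefore commutes with $\lozenge$: for every $z \in T_1$,
\[
f(\lozenge z) \;=\; f(\sim \neg z) \;=\; \sim \neg f(z) \;=\; \lozenge f(z).
\]
As a consequence, $f$ maps $T_1^{\lozenge}$ into $T_2^{\lozenge}$, and on $T_1^{\lozenge}$ the map $f^{\lozenge}$ agrees with the restriction of $f$, since $f^{\lozenge}(x) = \lozenge f(x) = f(\lozenge x) = f(x)$ whenever $\lozenge x = x$. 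Well-definedness is then immediate from $\lozenge\lozenge y = \lozenge y$ (Proposition~\ref{Prop_3}(1)).

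Next I would dispatch the easy cases. Preservation of $0$ and $1$ follows from $f(0)=0$, $f(1)=1$ together with Proposition~\ref{Prop1}(4)--(5). For the join, recall from Proposition~\ref{Prop_3} that $x\vee^{\lozenge} y = x\vee y$ inside $T^{\lozenge}$, so the statement $f^{\lozenge}(x\vee y) = f^{\lozenge}(x)\vee f^{\lozenge}(y)$ reduces to the fact that $f$ preserves $\vee$.

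The remaining two operations use Proposition~\ref{Prop_3}(3) and Proposition~\ref{Prop1}(8). For the meet, using that $f$ commutes with $\lozenge$,
\[
f^{\lozenge}(x \wedge^{\lozenge} y) \;=\; \lozenge f(\lozenge(x\wedge y)) \;=\; \lozenge\lozenge\bigl(f(x)\wedge f(y)\bigr) \;=\; \lozenge\bigl(f(x)\wedge f(y)\bigr),
\]
while by Proposition~\ref{Prop_3}(3),
\[
f^{\lozenge}(x) \wedge^{\lozenge} f^{\lozenge}(y) \;=\; \lozenge\bigl(\lozenge f(x)\wedge \lozenge f(y)\bigr) \;=\; \lozenge\bigl(f(x)\wedge f(y)\bigr).
\]
For the pseudocomplement, using Proposition~\ref{Prop1}(8) in the form $\neg\lozenge w = \neg w$,
\[
f^{\lozenge}(\neg^{\lozenge} x) \;=\; \lozenge f(\lozenge \neg x) \;=\; \lozenge\lozenge \neg f(x) \;=\; \lozenge \neg f(x),
\]
and
\[
\neg^{\lozenge} f^{\lozenge}(x) \;=\; \lozenge\neg\lozenge f(x) \;=\; \lozenge\neg f(x).
\]
Thus $f^{\lozenge}$ preserves $\neg^{\lozenge}$, completing the verification.

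I do not anticipate any real obstacle: the content of the proof is the identity $f\circ\lozenge = \lozenge\circ f$, which reduces every required equality to a consequence of Propositions~\ref{Prop1} and \ref{Prop_3}. The only point that deserves care is not to confuse the inherited join $\vee$ with a putative $\vee^{\lozenge}$; this is resolved at the outset by citing $x\vee^{\lozenge} y = x\vee y$ from Proposition~\ref{Prop_3}.
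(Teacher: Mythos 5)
Your proposal is correct and follows essentially the same route as the paper: the key identity $f\circ\lozenge=\lozenge\circ f$ (equivalently, that $f^{\lozenge}$ agrees with the restriction of $f$ to $T_1^{\lozenge}$) together with Proposition~\ref{Prop1}(8) handles $\neg^{\lozenge}$, and Proposition~\ref{Prop_3} handles the lattice operations, exactly as in the paper's proof. Your write-up merely spells out the verifications (well-definedness, $0$, $1$, $\vee$, $\wedge^{\lozenge}$) that the paper leaves to the reader.
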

\begin{proof}
We prove that $f^{\lozenge}$ is compatible with the operation $\neg^{\lozenge}$. If $x \in T_{1}^{\lozenge}$, then by Proposition \ref{Prop1} we have
\[
\neg^{\lozenge} f^{\lozenge}(x) = \lozenge \neg \lozenge f(x) = \lozenge \neg f(x) = f(\neg^{\lozenge} x).
\]
The remainder of the proof follows directly from Proposition \ref{Prop_3}.
\end{proof}

\begin{example} \label{ejdelta} 
Consider the Stone KAN-algebra from Example \ref{SKANalgebra}. So, we have the Stone algebra $T^{\lozenge} = \{0, d, e, f, 1\}$, given by
\vspace{0.25cm}
\begin{center}
\hspace{0.25cm}
\put(00,00){\makebox(1,1){$\bullet$}}
\put(00,30){\makebox(1,1){$\bullet$}}
\put(-30,60){\makebox(1,1){$\bullet$}}
\put(30,60){\makebox(1,1){$\bullet$}}
\put(00,90){\makebox(1,1){$\bullet$}}
\put(00,00){\line(0,0){30}}
\put(00,30){\line(1,1){30}}
\put(00,30){\line(-1,1){30}}
\put(-30,60){\line(1,1){30}}
\put(30,60){\line(-1,1){30}}
\put(10,00){\makebox(2,2){$0$}}
\put(10,30){\makebox(2,2){$d$}}
\put(-40,60){\makebox(2,2){$e$}} 
\put(40,60){\makebox(2,2){$f$}} 
\put(00,100){\makebox(2,2){$1$}}
\end{center}
where
\vspace{0.25cm}
\begin{center}
\begin{tabular}{|c|c|c|c|c|c|c|c|c|c|}\hline 
$x$                      & $0$ & $d$ & $e$ & $f$ & $1$  \\ \hline
$\neg^{\lozenge}x$  & $1$ & $0$  & $0$ & $0$ & $0$  \\ \hline
\end{tabular}
\end{center}
\vspace{0.1cm}
\end{example}

There exist a construction that relate distributive p-algebras and KAN-algebras. In \cite{Conrado-Miguel-Hernan}, the authors proved that if $T$ is a KAN-algebra, then the binary relation $\theta \subseteq T \times T$ defined by 
\[
(x,y) \in \theta \Longleftrightarrow \neg x = \neg y
\]
is an equivalence relation compatible with the operations $\vee$, $\wedge$ and $\neg$. We denote by $[x]_{\theta}$ the equivalence class of $x$ modulo $\theta$ and $T^{\theta} = \{ [x]_{\theta} \colon x \in T \}$ the set of all equivalence classes. So, the structure $\langle T^{\theta}, \vee^{\theta}, \wedge^{\theta}, \neg^\theta, [0]_\theta, [1]_\theta \rangle$ is a distributive p-algebra, where the operations on $T^{\theta}$ are defined by: 
\begin{itemize}
\item $[x]_{\theta} \vee^{\theta} [y]_{\theta} := [x \vee y]_{\theta}$,
\item $[x]_{\theta} \wedge^{\theta} [y]_{\theta} := [x \wedge y]_{\theta}$,
\item $\neg^{\theta} [x]_{\theta} := [ \neg x]_{\theta}$.
\end{itemize}
The order $\leq$ in $T^{\theta}$ can be characterized as $[x]_{\theta} \leq [y]_{\theta}$ if and only if $\neg y \leq \neg x$. Moreover, if $T$ is a Stone KAN-algebra, we have 
\[
\neg^{\theta} [x]_{\theta} \vee^{\theta} \neg^{\theta} \neg^{\theta} [x]_{\theta} = [\neg x \vee \neg \neg x]_{\theta} = [1]_{\theta}
\]
and $T^{\theta}$ satisfies the equation \ref{stone}. Thus, the following result holds:

\begin{theorem} 
Let $T \in {\bf{SKAN}}$. Then $\langle T^{\theta}, \vee^{\theta}, \wedge^{\theta}, \neg^{\theta}, [0]_{\theta}, [1]_{\theta} \rangle$ is a Stone algebra.
\end{theorem}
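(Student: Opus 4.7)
The plan is to exploit the fact, already recorded (citing \cite{Conrado-Miguel-Hernan}) in the paragraph immediately preceding the theorem, that for any $T \in \mathbf{KAN}$ the quotient $\langle T^{\theta}, \vee^{\theta}, \wedge^{\theta}, \neg^{\theta}, [0]_{\theta}, [1]_{\theta} \rangle$ is automatically a distributive $p$-algebra. Granted this, the entire content of the theorem reduces to verifying the Stone equation \eqref{stone} in $T^{\theta}$, that is, showing
\[
\neg^{\theta}[x]_{\theta} \vee^{\theta} \neg^{\theta}\neg^{\theta}[x]_{\theta} = [1]_{\theta}
\]
for every $x \in T$.

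First I would unfold the two quotient operations involved. By the definition of $\neg^{\theta}$ applied twice, $\neg^{\theta}\neg^{\theta}[x]_{\theta} = \neg^{\theta}[\neg x]_{\theta} = [\neg\neg x]_{\theta}$, and by the definition of $\vee^{\theta}$,
\[
\neg^{\theta}[x]_{\theta} \vee^{\theta} \neg^{\theta}\neg^{\theta}[x]_{\theta} = [\neg x]_{\theta} \vee^{\theta} [\neg\neg x]_{\theta} = [\neg x \vee \neg\neg x]_{\theta}.
\]
Then I would invoke the defining identity \eqref{stoneKAN} of $\mathbf{SKAN}$, namely $\neg x \vee \neg\neg x = 1$, to conclude that the equivalence class above coincides with $[1]_{\theta}$, which is exactly the Stone identity in $T^{\theta}$.

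There is really no substantial obstacle: the nontrivial work (well-definedness of the quotient, the distributive $p$-algebra structure, and the compatibility of $\theta$ with $\vee$ and $\neg$) has all been done in \cite{Conrado-Miguel-Hernan} and summarized in the excerpt, so the proof is essentially a one-line computation on representatives. If anything requires care, it is only to emphasize that passing the Stone KAN-identity \eqref{stoneKAN} from $T$ to the quotient is legitimate precisely because $\theta$ is compatible with the operations $\vee$ and $\neg$; once that is acknowledged, the Stone equation in $T^{\theta}$ is an immediate consequence.
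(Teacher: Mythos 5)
Your proposal is correct and follows exactly the argument the paper itself gives in the paragraph preceding the theorem: cite \cite{Conrado-Miguel-Hernan} for the distributive $p$-algebra structure of $T^{\theta}$, then compute $\neg^{\theta}[x]_{\theta} \vee^{\theta} \neg^{\theta}\neg^{\theta}[x]_{\theta} = [\neg x \vee \neg\neg x]_{\theta} = [1]_{\theta}$ using the Stone KAN-identity. Nothing is missing.
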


\begin{remark}
By previous results, we know that if $T \in {\bf{SKAN}}$, then $T^{\theta}$ is a Stone algebra. Moreover, if $f \colon T_1 \to T_2$ is a morphism in ${\bf{SKAN}}$, then the map $f^{\theta} \colon T_1^{\theta} \to T_2^{\theta}$ defined by $f^{\theta}([x]_\theta) = [f(x)]_\theta$ for all $x \in T_1$ is a morphism between Stone algebras. Therefore, $\theta$ is a functor from ${\bf{SKAN}}$ to the class of Stone algebra.
\end{remark}

The following theorem illustrates the relationship between the two previously described constructions.

\begin{theorem} \label{t2.13}
Let $T \in {\bf{SKAN}}$. Then the Stone algebras $\langle T^{\lozenge}, \vee, \wedge^{\lozenge}, \neg^{\lozenge}, 0, 1 \rangle$ and $\langle T^{\theta}, \vee^{\theta}, \wedge^{\theta}, \neg^{\theta}, [0]_{\theta}, [1]_{\theta} \rangle$ are isomorphic.
\end{theorem}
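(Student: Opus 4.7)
The natural candidate for the isomorphism is the map $\varphi \colon T^{\lozenge} \to T^{\theta}$ defined by $\varphi(x) = [x]_{\theta}$, with expected inverse $\psi \colon T^{\theta} \to T^{\lozenge}$ given by $\psi([x]_{\theta}) = \lozenge x$. The plan is to verify that $\varphi$ is well-defined, bijective, and compatible with the Stone-algebra operations, using Proposition~\ref{Prop1}(8), which states $\neg \lozenge x = \neg x$, as the central tool that bridges the two constructions.

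First I would check that $\psi$ is well-defined: if $\neg x = \neg y$, then $\sim \neg x = \sim \neg y$, i.e.\ $\lozenge x = \lozenge y$. Together with $\lozenge \lozenge x = \lozenge x$ from Proposition~\ref{Prop_3}(1), this shows $\psi$ lands in $T^{\lozenge}$. For the inverse relationships, $(\psi \circ \varphi)(x) = \lozenge x = x$ for $x \in T^{\lozenge}$, while $(\varphi \circ \psi)([x]_{\theta}) = [\lozenge x]_{\theta}$ equals $[x]_{\theta}$ precisely because $\neg \lozenge x = \neg x$ by Proposition~\ref{Prop1}(8). This simultaneously establishes bijectivity.

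Next I would check that $\varphi$ preserves the operations. Preservation of $\vee$ and of $0, 1$ is immediate, since joins in $T^{\lozenge}$ coincide with joins in $T$ (Proposition~\ref{Prop_3}(2)). For meets, one needs $\varphi(x \wedge^{\lozenge} y) = [x \wedge y]_{\theta}$, i.e.\ $[\lozenge(x \wedge y)]_{\theta} = [x \wedge y]_{\theta}$, which again reduces to the identity $\neg \lozenge z = \neg z$ applied to $z = x \wedge y$. For the pseudocomplement, $\varphi(\neg^{\lozenge} x) = [\lozenge \neg x]_{\theta}$ and $\neg^{\theta}[x]_{\theta} = [\neg x]_{\theta}$ coincide by the same identity applied to $\neg x$.

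The main obstacle, if any, is psychological rather than technical: one must recognize that although $T^{\lozenge}$ and $T^{\theta}$ are built by ostensibly different procedures (restricting to fixpoints of $\lozenge$ versus quotienting by kernel of $\neg$), Proposition~\ref{Prop1}(8) ensures that each equivalence class $[x]_{\theta}$ contains a \emph{unique} fixpoint of $\lozenge$, namely $\lozenge x$, and this canonical representative is exactly what makes $\varphi$ and $\psi$ mutually inverse. Once this observation is in place, every required verification reduces to $\neg \lozenge z = \neg z$ together with the lattice identities of Proposition~\ref{Prop_3}.
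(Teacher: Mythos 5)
Your proposal is correct and takes essentially the same approach as the paper: the paper uses the very same isomorphism, written in the opposite direction as $\varphi([x]_{\theta}) := \lozenge x$, and verifies the same identities via Proposition~\ref{Prop1}(8) and Proposition~\ref{Prop_3}. Your extra checks of well-definedness and mutual inverseness are details the paper leaves implicit, not a different argument.
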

\begin{proof}
Consider the map $\varphi \colon T^{\theta} \to T^{\lozenge}$ given by $\varphi([x]_{\theta}) := \lozenge x$. Then, by Proposition \ref{Prop_3}, we have 
\[
\varphi([x]_{\theta} \vee^{\theta} [y]_{\theta}) = \lozenge(x \vee y) = \lozenge x \vee \lozenge y = \varphi([x]_{\theta}) \vee^{\theta} \varphi([y]_{\theta})
\]
and
\[
\begin{array}{llllll}
\varphi([x]_{\theta} \wedge^{\theta} [y]_{\theta}) & = & \lozenge(x \wedge y) \\ 
                                                                           & = & \lozenge (\lozenge x \wedge \lozenge y) \\ 
                                                                           & = & \lozenge(\varphi([x]_{\theta}) \wedge \varphi([y]_{\theta})) \\
                                                                           & = & \varphi([x]_{\theta}) \wedge^{\lozenge} \varphi([y]_{\theta}).
\end{array}
\]
On the other hand, by Proposition \ref{Prop1}, 
\[
\varphi(\neg^{\theta} [x]_{\theta}) = \varphi([\neg x]_{\theta}) = \lozenge \neg x = \lozenge \neg \lozenge x = \neg^{\lozenge} \lozenge x = \neg^{\lozenge} \varphi([x]_{\theta}), 
\]
i.e., $\varphi(\neg^{\theta} [x]_{\theta}) = \neg^{\lozenge} \varphi([x]_{\theta})$. This completes the proof.
\end{proof}

\subsection{Kalman's construction for Stone algebras}

Now, we present a Kalman's construction for Stone algebras. Using this construction, we prove that the category of Stone algebras is equivalent to the category of Stone KANc-algebras.

Let $\langle A,\vee,\wedge,^{\ast},0,1\rangle$ be a distributive p-algebra and let us consider the set 
\[
K(A) := \{ (x,y) \in A \times A \colon x \wedge y = 0 \}
\]
endowed with the following operations:
\begin{eqnarray*}
(a,b) \vee (d,e)        & := & (a \vee d, b \wedge e), \\
(a,b) \wedge (d,e)   & := & (a \wedge d, b \vee e), \\
\sim (a,b)                & := & (b, a), \\
\neg (a,b)                & := & (a^{\ast}, a), \\
0_K                             & := & (0,1), \\
1_K                             & := & (1,0), \\ 
c_K                             & := & (0,0). \\ 
\end{eqnarray*}
From the results developed in \cite{Conrado-Miguel-Hernan} we conclude that the structure 
\[
\langle K(A), \vee, \wedge, \sim, \neg, c_K, 0_K, 1_K \rangle
\]
is a centered KAN-algebra, where the order on $K(A)$ is given by $(a, b) \leq (d, e)$ if and only if $a \leq d$ and $e \leq b$. The following lemma provides  necessary and sufficient conditions for $K(A)$ to be a Stone KANc-algebra.

\begin{lemma}
Let $A$ be a distributive p-algebra. Then $A$ is a Stone algebra if and only if $K(A)$ is a Stone KANc-algebra.
\end{lemma}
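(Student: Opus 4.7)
The plan is to reduce the Stone KANc identity $\neg z \vee \neg\neg z = 1_K$ in $K(A)$ to the Stone identity $x^{\ast} \vee x^{\ast\ast} = 1$ in $A$ by a direct computation. Since $K(A)$ is already a centered KAN-algebra by the construction recalled from \cite{Conrado-Miguel-Hernan}, the only identity left to analyse is \eqref{stoneKAN}.

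I would start by unfolding $\neg$ twice on a generic element $(a,b) \in K(A)$. By definition $\neg(a,b) = (a^{\ast}, a)$, and applying $\neg$ again (to the pair $(a^{\ast}, a)$, which indeed lies in $K(A)$ since $a \wedge a^{\ast} = 0$) gives $\neg\neg(a,b) = (a^{\ast\ast}, a^{\ast})$. Joining the two results with the definition of $\vee$ in $K(A)$ yields
\[
\neg(a,b) \vee \neg\neg(a,b) \;=\; (a^{\ast} \vee a^{\ast\ast},\; a \wedge a^{\ast}) \;=\; (a^{\ast} \vee a^{\ast\ast},\; 0),
\]
because $a \wedge a^{\ast} = 0$ in any distributive $p$-algebra. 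Hence this element equals $1_K = (1,0)$ if and only if $a^{\ast} \vee a^{\ast\ast} = 1$.

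With this equivalence in hand, both implications fall out easily. If $A$ is a Stone algebra, the first coordinate $a$ of any pair $(a,b) \in K(A)$ satisfies $a^{\ast} \vee a^{\ast\ast} = 1$, so \eqref{stoneKAN} holds in $K(A)$ and $K(A)$ is a Stone KANc-algebra. Conversely, assume $K(A)$ satisfies \eqref{stoneKAN}. Given an arbitrary $a \in A$, the pair $(a,0)$ belongs to $K(A)$ (since $a \wedge 0 = 0$), so applying the identity to $(a,0)$ gives $(a^{\ast} \vee a^{\ast\ast}, 0) = (1,0)$; comparing first coordinates yields $a^{\ast} \vee a^{\ast\ast} = 1$, and $A$ satisfies \eqref{stone}.

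There is no real obstacle: the argument is essentially the recursive computation of $\neg\neg$ in $K(A)$, together with the observation that every $a \in A$ is witnessed inside $K(A)$ by a pair such as $(a,0)$ or $(a, a^{\ast})$. The only point worth flagging is that one must keep the two notations $\neg$ (in $K(A)$) and ${}^{\ast}$ (in $A$) clearly separated when iterating, but this is a matter of bookkeeping rather than substance.
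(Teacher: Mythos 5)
Your proposal is correct and follows essentially the same route as the paper's own proof: unfold $\neg$ twice on a generic pair to get $\neg(a,b)\vee\neg\neg(a,b)=(a^{\ast}\vee a^{\ast\ast},\,a\wedge a^{\ast})$, deduce the forward direction from the Stone identity in $A$, and obtain the converse by evaluating at the witness $(a,0)\in K(A)$. No discrepancies worth noting.
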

\begin{proof}
Suppose that $A$ is a Stone algebra, i.e., satisfies the equation \ref{stone}. Let $(a,b) \in K(A)$. Then 
\[
\begin{array}{lllllll}
\neg (a,b) \vee \neg \neg (a,b) & = & (a^{\ast}, a) \vee (a^{\ast\ast}, a^{\ast}) \\
                                                & = & (a^{\ast} \vee a^{\ast\ast}, a \wedge a^{\ast}) \\
                                                & = & (1,0)  
\end{array}
\]
and $K(A)$ is a Stone KAN-algebra. Conversely, assume that $K(A)$ is a Stone KANc-algebra. Then $K(A)$ satisfies the equation \ref{stoneKAN}. If $x \in A$, then $(x, 0) \in K(A)$ and 
\[
(1,0) = \neg (x,0) \vee \neg \neg (x,0) = (x^{\ast} \vee x^{\ast\ast}, x \wedge x^{\ast})
\]
which implies $x^{\ast} \vee x^{\ast\ast} = 1$. So, $A$ is a Stone algebra.
\end{proof}

\begin{example} \label{ejKalman} 
Consider the Stone KAN-algebra $\langle T, \vee, \wedge, \sim, \neg, 0, 1 \rangle$ from Example \ref{SKANalgebra} and the Stone algebra $T^{\lozenge}$ associated with $\langle T, \vee, \wedge, \sim, \neg, 0, 1 \rangle$ from Example \ref{ejdelta}. Then, by Kalman's construction, we obtain the centered Stone KAN-algebra $K(T^{\lozenge})$ given by
\vspace{0.5cm}
\begin{center}
\hspace{0.25cm}
\put(00,00){\makebox(1,1){$\bullet$}}
\put(-30,30){\makebox(1,1){$\bullet$}}
\put(30,30){\makebox(1,1){$\bullet$}}
\put(00,60){\makebox(1,1){$\bullet$}}
\put(00,90){\makebox(1,1){$\bullet$}}
\put(00,120){\makebox(1,1){$\bullet$}}
\put(-30,150){\makebox(1,1){$\bullet$}}
\put(30,150){\makebox(1,1){$\bullet$}}
\put(00,180){\makebox(1,1){$\bullet$}}
\put(00,00){\line(1,1){30}}
\put(00,00){\line(-1,1){30}}
\put(-30,30){\line(1,1){30}}
\put(30,30){\line(-1,1){30}}
\put(00,60){\line(0,0){30}}
\put(00,90){\line(0,0){30}}
\put(00,120){\line(1,1){30}}
\put(00,120){\line(-1,1){30}}
\put(-30,150){\line(1,1){30}}
\put(30,150){\line(-1,1){30}}
\put(00,-10){\makebox(2,2){$ (0,1)$}}
\put(-50,30){\makebox(2,2){$ (0,e)$}} 
\put(50,30){\makebox(2,2){$ (0,f)$}} 
\put(20,60){\makebox(2,2){$ (0,d)$}} 
\put(20,90){\makebox(2,2){$(0,0)$}}
\put(20,120){\makebox(2,2){$ (d,0)$}}
\put(-50,150){\makebox(2,2){$ (e,0)$}} 
\put(50,150){\makebox(2,2){$ (f,0)$}} 
\put(00,190){\makebox(2,2){$ (1,0)$}}
\end{center}
\vspace{0.2cm}
\end{example}

Let $T \in {\bf{KANc}}$. Consider the set $C(T) := \{ x \in T \colon x \geq c \}$ and define the unary operation $\neg^{c} x := \neg x \vee c$ on $C(T)$.

\begin{lemma} \label{lemaant}
Let $T \in {\bf{SKANc}}$. Then $\langle C(T), \vee, \wedge, \neg^{c}, c, 1 \rangle$ is a Stone algebra. 
\end{lemma}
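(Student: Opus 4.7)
The plan is to verify in sequence that (i) $\langle C(T), \vee, \wedge, c, 1 \rangle$ is a bounded distributive lattice, (ii) $\neg^{c}$ lands in $C(T)$ and is the pseudocomplementation, and (iii) $\neg^{c}$ satisfies the Stone identity. Step (i) is immediate because $C(T)$ is an up-set of $T$, hence closed under $\vee$ and $\wedge$, with bottom $c$ and top $1$, and $\neg^{c} x = \neg x \vee c$ visibly lies in $C(T)$. For (ii), I must show that for $x, y \in C(T)$, $x \wedge y = c$ iff $y \leq \neg^{c} x$. The $(\Leftarrow)$ direction is routine: assuming $y \leq \neg x \vee c$, distributivity, (N3), and $x \wedge c = c$ give $x \wedge y \leq (x \wedge \sim x) \vee c$; since $c$ is the center, (K3) applied to $c$ yields $x \wedge \sim x \leq c \vee \sim c = c$, so $x \wedge y \leq c$, and $x, y \geq c$ forces equality.

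The main obstacle is the $(\Rightarrow)$ direction, which I handle using Lemma \ref{auxiliar1} together with the decomposition in Lemma \ref{auxiliar2}(5). Applying $\lozenge$ to the equality $x \wedge y = c$ and using $\lozenge c = 0$ from Lemma \ref{auxiliar2}(2) produces $\lozenge (x \wedge y) = 0$. Since $\wedge$ is commutative, Lemma \ref{auxiliar1} (applied to the pair $(y,x)$) then yields $\lozenge y \leq \lozenge \neg x$. On the other hand, Lemma \ref{auxiliar2}(5) gives $y = y \vee c = \lozenge y \vee c$ (using $y \geq c$) and $\neg^{c} x = \neg x \vee c = \lozenge \neg x \vee c$. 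Combining,
\[
y = \lozenge y \vee c \leq \lozenge \neg x \vee c = \neg^{c} x,
\]
as required.

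Finally, step (iii) is a short calculation: by (N2) and $\neg c = 1$ (Lemma \ref{auxiliar2}(1)) one obtains
\[
\neg^{c} \neg^{c} x = \neg(\neg x \vee c) \vee c = (\neg \neg x \wedge 1) \vee c = \neg \neg x \vee c,
\]
so
\[
\neg^{c} x \vee \neg^{c} \neg^{c} x = \neg x \vee \neg \neg x \vee c = 1,
\]
where the last equality uses the hypothesis (\ref{stoneKAN}) that $T \in \mathbf{SKAN}$. Putting (i)--(iii) together yields the claim.
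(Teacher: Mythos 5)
Your proposal is correct. The part of the argument that the paper actually carries out --- the verification of the Stone identity $\neg^{c}x \vee \neg^{c}\neg^{c}x = 1$ via (N2), $\neg c = 1$ and equation (\ref{stoneKAN}) --- is essentially identical to yours. Where you differ is in the distributive p-algebra structure of $C(T)$: the paper simply imports this from \cite{Conrado-Miguel-Hernan} (it is part of the known Kalman-type analysis of centered KAN-algebras), whereas you re-derive it inside the paper's own toolkit. Your derivation is sound: the $(\Leftarrow)$ direction correctly combines distributivity, (N3), $x \wedge c = c$ and (K3) at the center to get $x \wedge \sim x \leq c \vee \sim c = c$; the $(\Rightarrow)$ direction correctly uses $\lozenge c = 0$ from Lemma \ref{auxiliar2}, Lemma \ref{auxiliar1} to obtain $\lozenge y \leq \lozenge\neg x$, and Lemma \ref{auxiliar2}(5) to translate this back to $y \leq \neg x \vee c$. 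The payoff of your route is self-containment --- the pseudocomplementation of $C(T)$ is established directly from the lemmas proved in Section 2 rather than by citation --- at the cost of a longer proof of a fact the paper treats as already known.
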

\begin{proof} 
By the results given in \cite{Conrado-Miguel-Hernan} we have that $\langle C(T), \vee, \wedge, \neg^{c}, c, 1 \rangle$ is a distributive p-algebra. We prove that $\neg^{c}x \vee \neg^{c}\neg^{c}x = 1$, for all $x \in C(T)$. If $x \in C(T)$, then since the equation \ref{stoneKAN} holds, then by (N2) and Lemma \ref{auxiliar2} we have 
\[
\begin{array}{lllllll}
\neg^{c}x \vee \neg^{c}\neg^{c}x & = & (\neg x \vee c) \vee (\neg(\neg x \vee c)) \vee c \\ 
                                                    &=& (\neg x \vee c) \vee (\neg \neg x \wedge 1) \vee c \\
                                                    & = & \neg x \vee \neg \neg x \vee c  \\
                                                    &=& 1. \\
\end{array}
\]
Therefore, $C(T)$ is a Stone algebra.
\end{proof}

From Theorem \ref{t2.13} and \cite{Conrado-Miguel-Hernan} we obtain the following results.

\begin{theorem} \label{C(T)=H} 
Let $T \in {\bf{SKANc}}$. Then the Stone algebras $\langle C(T), \vee, \wedge, \neg^{c}, c, 1 \rangle$ and $\langle T^{\lozenge}, \vee, \wedge^{\lozenge}, \neg^{\lozenge}, 0, 1 \rangle$ are isomorphic.
\end{theorem}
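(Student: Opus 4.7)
The plan is to exhibit an explicit isomorphism $\psi\colon C(T)\to T^{\lozenge}$ by setting $\psi(x):=\lozenge x$, with candidate inverse $\phi\colon T^{\lozenge}\to C(T)$ given by $\phi(y):=y\vee c$. I would first verify well-definedness: for $x\in C(T)$, item~(1) of Proposition~\ref{Prop_3} gives $\lozenge\lozenge x=\lozenge x$, so $\psi(x)\in T^{\lozenge}$; for $y\in T^{\lozenge}$, clearly $y\vee c\geq c$, so $\phi(y)\in C(T)$.

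Next I would show that $\psi$ and $\phi$ are mutually inverse. For $x\in C(T)$, one has $x\vee c=x$ because $x\geq c$; combined with item~(5) of Lemma~\ref{auxiliar2}, which yields $x\vee c=\lozenge x\vee c$, this gives $x=\lozenge x\vee c=\phi(\psi(x))$. In the other direction, for $y\in T^{\lozenge}$, Proposition~\ref{Prop_3}(2) together with Lemma~\ref{auxiliar2}(2) gives $\psi(\phi(y))=\lozenge(y\vee c)=\lozenge y\vee\lozenge c=y\vee 0=y$.

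Then I would check that $\psi$ respects the operations. Preservation of $\vee$ follows from Proposition~\ref{Prop_3}(2), preservation of $\wedge^{\lozenge}$ from Proposition~\ref{Prop_3}(3) via $\lozenge(x\wedge y)=\lozenge(\lozenge x\wedge\lozenge y)$, preservation of the constants from Lemma~\ref{auxiliar2}(2) and Proposition~\ref{Prop1}(4), and preservation of the pseudocomplement from the chain
\[
\psi(\neg^{c}x)=\lozenge(\neg x\vee c)=\lozenge\neg x\vee\lozenge c=\lozenge\neg x=\lozenge\neg\lozenge x=\neg^{\lozenge}\psi(x),
\]
where the last two equalities use Proposition~\ref{Prop1}(8) and the definition of $\neg^{\lozenge}$.

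The most delicate step is the identity $x=\lozenge x\vee c$ for $x\in C(T)$, which forces an appeal to the centered structure through Lemma~\ref{auxiliar2}(5); once this identity is in hand, the rest of the verification is mechanical. An alternative route, presumably the one suggested by the phrase \emph{from Theorem~\ref{t2.13} and \textup{\cite{Conrado-Miguel-Hernan}}}, is to compose the isomorphism $T^{\theta}\cong T^{\lozenge}$ of Theorem~\ref{t2.13} with an isomorphism $C(T)\cong T^{\theta}$ established in \cite{Conrado-Miguel-Hernan}, but the direct construction above has the virtue of being self-contained.
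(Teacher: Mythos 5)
Your proof is correct and takes essentially the same route as the paper, which exhibits exactly the same isomorphism --- the map $h(x) = x \vee c$ from $T^{\lozenge}$ to $C(T)$, i.e.\ your $\phi$ --- and checks compatibility with $\wedge^{\lozenge}$ and the pseudocomplement via Lemma~\ref{auxiliar2}, (N2) and Proposition~\ref{Prop1}, just as you do in the opposite direction with $\psi(x) = \lozenge x$; your explicit verification that $\psi$ and $\phi$ are mutually inverse only makes precise what the paper leaves implicit. The sole nitpick is a citation slip: $\lozenge 1 = 1$ is item (5), not item (4), of Proposition~\ref{Prop1}.
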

\begin{proof} 
Let $h \colon T^{\lozenge} \to C(T)$ be the map given by $h(x) := x \vee c$. So, $h(0) = c$, $h(1) = 1$ and $h(x \vee y) = h(x) \vee h(y)$ are straightforward. By Lemma \ref{auxiliar2} we have
\[
\begin{array}{lllllll}
h(x \wedge^{\lozenge} y) & = & (x \wedge^{\lozenge} y) \vee c &=& \lozenge(x \wedge y) \vee c \\
                                    & = & (x \wedge y) \vee c               &=& (x \vee c) \wedge (y \vee c) \\
                                    & = & h(x) \wedge h(y).
\end{array}
\]
On the other hand, by Lemma \ref{auxiliar2}, (N2) and Proposition \ref{Prop1}, it follows  
\[
\begin{array}{lllllll}
\neg^{c} h(x) & = & \neg (x \vee c) \vee c       &=& (\neg x \wedge \neg c) \vee c \\
                     & = & \neg x \vee c                    &=&  \lozenge \neg x \vee c \\
                     & = & \neg^{\lozenge} x \vee c      & = & h(\neg^{\lozenge} x).
\end{array}
\]
We can conclude that $h$ is an isomorphism, as desired.
\end{proof}

\begin{corollary} \label{3construc}
Let $T \in {\bf{SKANc}}$. Then the centered Stone KAN-algebras $K(T^{\theta})$, $K(T^{\lozenge})$ and $K(C(T))$ are isomorphic.
\end{corollary}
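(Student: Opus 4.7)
The plan is to reduce the corollary to the two preceding theorems together with the functoriality of Kalman's construction $K$. The three objects $K(T^{\theta})$, $K(T^{\lozenge})$, and $K(C(T))$ are all obtained by applying $K$ to three Stone algebras naturally associated with $T$, and the bulk of the work — showing that the three Stone algebras themselves are pairwise isomorphic — has already been done in Section 3.

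Concretely, I would first invoke Theorem~\ref{t2.13} to obtain an isomorphism of Stone algebras $\varphi \colon T^{\theta} \to T^{\lozenge}$ given by $\varphi([x]_{\theta}) = \lozenge x$. Next, I would invoke Theorem~\ref{C(T)=H} to obtain an isomorphism of Stone algebras $h \colon T^{\lozenge} \to C(T)$ given by $h(x) = x \vee c$. Composing, $h \circ \varphi \colon T^{\theta} \to C(T)$ is also a Stone isomorphism, so the three Stone algebras $T^{\theta}$, $T^{\lozenge}$, and $C(T)$ are pairwise isomorphic in \textbf{Stone}.

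Finally, since Kalman's construction $K \colon \mathbf{Stone} \to \mathbf{SKANc}$ is a functor (every Stone homomorphism $f \colon A \to B$ induces the morphism $K(f)(a,b) := (f(a),f(b))$ in $\mathbf{SKANc}$, which is well defined because $f$ preserves $\wedge$ and $0$, and which is straightforwardly compatible with $\vee,\wedge,\sim,\neg$ and the distinguished constants of $K(A)$), it preserves isomorphisms. Applying $K$ to the isomorphisms above yields the chain
\[
K(T^{\theta}) \;\cong\; K(T^{\lozenge}) \;\cong\; K(C(T))
\]
in $\mathbf{SKANc}$, which is the desired conclusion.

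There is essentially no obstacle to this argument: the only point deserving explicit mention is the functoriality of $K$, which is routine and implicit in the role of $K$ throughout the paper. The corollary is therefore a direct consequence of Theorems~\ref{t2.13} and~\ref{C(T)=H} together with this functoriality.
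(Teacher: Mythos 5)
Your argument is correct and is exactly the justification the paper intends: the corollary follows by composing the Stone isomorphisms $T^{\theta}\cong T^{\lozenge}$ (Theorem~\ref{t2.13}) and $T^{\lozenge}\cong C(T)$ (Theorem~\ref{C(T)=H}) and applying the functor $K$, which preserves isomorphisms. The paper leaves this composition implicit, so your write-up matches its (unstated) proof.
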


We denote by ${\bf{Stone}}$ the category whose objects are Stone algebras and by ${\bf{SKANc}}$ the category whose objects are Stone KANc-algebras. In both cases, the morphisms are the corresponding algebra homomorphisms. Furthermore, if $A$ and $B$ are two Stone algebras and $h \colon A \to B$ is a morphism in ${\bf{Stone}}$, then it is easy to see that the map $K(h) \colon K(A) \to K(B)$ defined by $K(h)(x,y) = (h(x), h(y))$ is a morphism in ${\bf{SKANc}}$. 
It is evident that these assignments establish a functor $K$ from ${\bf{Stone}}$ to ${\bf{SKANc}}$. Moreover, if $T,S \in {\bf{SKANc}}$ and $f \colon T \to S$ is a homomorphism of centered Stone KAN-algebras, then $C(f) \colon C(T) \to C(S)$ defined by $C(f)(x) = f(x)$ is a homomorphism of Stone algebras. 
By Lemma \ref{lemaant}, it is now clear that the assignments $T \mapsto C(T)$ and $f \mapsto C(f)$ determine a functor $C$ from ${\bf{SKANc}}$ to ${\bf{Stone}}$.

\begin{lemma} \label{lemaalpha} 
Let $A$ be a Stone algebra. Then the map $\alpha \colon  A \to C(K(A))$ defined by $\alpha(x) := (x,0)$ is an isomorphism in ${\bf Stone}$. 
\end{lemma}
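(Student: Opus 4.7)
The plan is to first pin down $C(K(A))$ concretely, then verify that $\alpha$ is a bijection onto this set, and finally check compatibility with the Stone algebra operations.

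First, I would describe the center and the set $C(K(A))$ explicitly. Recall that the center of $K(A)$ is $c_{K} = (0,0)$ and that the order on $K(A)$ is given by $(a,b) \leq (d,e)$ if and only if $a \leq d$ and $e \leq b$. Thus $(x,y) \in C(K(A))$ iff $(0,0) \leq (x,y)$, iff $y \leq 0$, iff $y = 0$. Consequently
\[
C(K(A)) = \{ (x, 0) : x \in A \},
\]
which immediately shows that $\alpha(x) := (x,0)$ is well defined and is a bijection from $A$ onto $C(K(A))$, with inverse the first projection.

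Next I would check that $\alpha$ preserves all the operations of a distributive $p$-algebra. Preservation of $0$ and $1$ is obvious: $\alpha(0) = (0,0) = c$ is the bottom of $C(K(A))$ and $\alpha(1) = (1,0) = 1_K$. For the lattice operations, using the definition of $\vee$ and $\wedge$ in $K(A)$,
\[
\alpha(x \vee y) = (x \vee y, 0 \wedge 0) = (x,0) \vee (y,0) = \alpha(x) \vee \alpha(y),
\]
and similarly for $\wedge$. For the pseudocomplement $\neg^{c}$ on $C(K(A))$, I would compute
\[
\neg^{c}\alpha(x) = \neg(x,0) \vee c_{K} = (x^{\ast}, x) \vee (0,0) = (x^{\ast}, 0) = \alpha(x^{\ast}),
\]
so $\alpha$ preserves pseudocomplementation as well. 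Since both $A$ and $C(K(A))$ are Stone algebras by hypothesis and by Lemma \ref{lemaant} respectively, there is no extra identity to verify, and the bijective homomorphism $\alpha$ is an isomorphism in $\mathbf{Stone}$.

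I do not expect a genuine obstacle here: the argument is essentially bookkeeping from the definitions of $K(A)$, of the center $c_{K} = (0,0)$, and of $\neg^{c}$. The only mildly subtle point is the identification $C(K(A)) = \{(x,0) : x \in A\}$, which relies on reading off the coordinatewise order correctly so that the condition $(x,y) \geq (0,0)$ collapses to $y = 0$; once this is observed, everything else is a one-line computation.
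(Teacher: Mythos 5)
Your proposal is correct and follows essentially the same route as the paper: the key step in both is the computation $\neg^{c}\alpha(x) = (x^{\ast},x)\vee(0,0) = (x^{\ast},0) = \alpha(x^{\ast})$, the paper simply omitting the routine identification $C(K(A)) = \{(x,0) : x \in A\}$ and the lattice-operation checks that you spell out. Your extra bookkeeping (bijectivity via the order collapsing $(x,y)\geq(0,0)$ to $y=0$, and the appeal to Lemma \ref{lemaant} for $C(K(A))$ being a Stone algebra) is accurate and fills in exactly what the paper leaves implicit.
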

\begin{proof} 
We only prove that the map $\alpha$ commutes with the unary operator $^{\ast}$. If $x \in A$, then 
\[
\neg^{c} \alpha(x) = \neg \alpha(x)\vee c=(x^{\ast},x)\vee (0,0)=(x^{\ast},0)=\alpha(x^{\ast}),
\]
as claimed.
\end{proof}

\begin{corollary} 
Let $A$ be a Stone algebra. Then the Stone algebras $A$ and $K(A)^{\lozenge}$ are isomorphic. 
\end{corollary}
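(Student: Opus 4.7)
The plan is to obtain the corollary by composing isomorphisms already supplied by the two immediately preceding results. Since $A$ is a Stone algebra, the unlabelled lemma just before Example \ref{ejKalman} guarantees that $K(A)$ is a centered Stone KAN-algebra, so $K(A) \in \mathbf{SKANc}$. Therefore Theorem \ref{C(T)=H}, applied with $T := K(A)$, yields an isomorphism of Stone algebras between $C(K(A))$ and $K(A)^{\lozenge}$. On the other hand, Lemma \ref{lemaalpha} provides an isomorphism $\alpha \colon A \to C(K(A))$ in $\mathbf{Stone}$. Composing these two isomorphisms furnishes the desired isomorphism $A \cong K(A)^{\lozenge}$.

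If a concrete realisation is preferred, one can trace the composite: $x \in A$ is sent by $\alpha$ to $(x,0) \in C(K(A))$, and then mapped back to $K(A)^{\lozenge}$ by the inverse of the map $h$ from Theorem \ref{C(T)=H}, which sends an element of $K(A)^{\lozenge}$ to its image under $(\cdot) \vee c_K$. Since this inverse is just $\lozenge$ restricted to $C(K(A))$, a brief calculation using $\sim(a,b) = (b,a)$ and $\neg(a,b) = (a^{\ast},a)$ on $K(A)$ gives $\lozenge(x,0) = \sim\neg(x,0) = (x, x^{\ast})$. Hence the composite isomorphism is $x \mapsto (x, x^{\ast})$ and $K(A)^{\lozenge} = \{(x, x^{\ast}) : x \in A\}$.

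No genuine obstacle should arise: all the substantive content is already packaged in Lemma \ref{lemaalpha} and Theorem \ref{C(T)=H}, and only the composition of isomorphisms needs to be noted.
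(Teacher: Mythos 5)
Your proposal is correct and follows exactly the paper's own route: the paper proves this corollary by combining Theorem \ref{C(T)=H} (applied to $K(A)$) with Lemma \ref{lemaalpha}, which is precisely your composition, and your explicit description $x \mapsto (x,x^{\ast})$ is a correct (if optional) unwinding of that composite.
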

\begin{proof}
This follows from Theorem \ref{C(T)=H} and Lemma \ref{lemaalpha}.
\end{proof}

\begin{lemma} \label{isobeta} 
Let $T\in {\bf SKANc}$. Then the map $\beta \colon T \to K(C(T))$ defined by $\beta(x) := (x \vee c, \sim x \vee c)$ is an isomorphism in ${\bf{SKANc}}$.    
\end{lemma}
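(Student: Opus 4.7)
The plan is to verify four standard things about $\beta$: it lands in $K(C(T))$, it preserves each operation, it is injective, and it is surjective.

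Well-definedness is immediate: both $x \vee c$ and $\sim x \vee c$ lie above $c$, so they belong to $C(T)$, and their meet equals $(x \wedge \sim x) \vee c$, which collapses to $c$ via the Kleene inequality $x \wedge \sim x \leq c \vee \sim c = c$. For the morphism properties, the clauses for $0$, $1$, $c$, $\vee$, $\wedge$, and $\sim$ will follow by direct computation from distributivity and the De Morgan laws of $T$. The delicate clause is $\neg$: I would expand $\neg \beta(x) = (\neg^{c}(x \vee c),\, x \vee c)$, where the pseudocomplement is taken in $C(T)$, use $\neg^{c}(x \vee c) = \neg x \vee c$ (relying on $\neg c = 1$ from Lemma~\ref{auxiliar2}(1)), and compare with $\beta(\neg x) = (\neg x \vee c,\, \lozenge x \vee c)$; equality in the second coordinate is exactly Lemma~\ref{auxiliar2}(5).

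For injectivity I plan to invoke Moisil's determination principle (Lemma~\ref{MDP}). From $\beta(x) = \beta(y)$, applying $\lozenge$ to each coordinate and using $\lozenge c = 0$ together with the distribution of $\lozenge$ over $\vee$ (Proposition~\ref{Prop_3}(2)), I get $\lozenge x = \lozenge y$ and $\lozenge \sim x = \lozenge \sim y$; the latter means $\sim \square x = \sim \square y$, so $\square x = \square y$, and Moisil closes the argument.

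For surjectivity, given $(u,v) \in K(C(T))$, I would propose the preimage $x := u \wedge \neg v$. Distributing, $x \vee c = (u \vee c) \wedge (\neg v \vee c) = u \wedge \neg^{c} v$, and this equals $u$ because $u \wedge v = c$ in $C(T)$ forces $u \leq \neg^{c} v$. For the second coordinate, $\sim x \vee c = (\sim u \vee \lozenge v) \vee c = v$, using $\sim u \leq c$ (as $u \geq c$ implies $\sim u \leq \sim c = c$) together with $\lozenge v \vee c = v$ from Lemma~\ref{auxiliar2}(5). The main obstacle is precisely identifying the right preimage for surjectivity; once $x = u \wedge \neg v$ is guessed, everything reduces to the interaction between $\lozenge$, $\square$, and $c$ captured by the auxiliary lemmas.
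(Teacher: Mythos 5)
Your proposal is correct, but it is more self-contained than the paper's argument, which is deliberately terse: the paper only verifies that $\beta$ lands in $K(C(T))$ and that it commutes with $\neg$, delegating the remaining structure (preservation of $\vee,\wedge,\sim,c,0,1$ and bijectivity) to the corresponding result for centered KAN-algebras already established in \cite{Conrado-Miguel-Hernan}. Where you and the paper overlap, the computations agree: your treatment of the $\neg$-clause via $\neg^{c}(x\vee c)=\neg x\vee c$ (using $\neg c=1$ and (N2)) and $\lozenge x\vee c=x\vee c$ (Lemma~\ref{auxiliar2}(5)) is exactly the paper's calculation. Your well-definedness step is in fact slightly more elementary: you collapse $(x\wedge\sim x)\vee c$ to $c$ directly from (K3) with $y=c$, whereas the paper routes through $\lozenge(x\wedge\sim x)=0$ (Lemma~\ref{t}) and Lemma~\ref{auxiliar2}(5). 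What your version buys is an explicit proof of bijectivity: the preimage $x=u\wedge\neg v$ works as you claim, since $u\wedge v=c$ in $C(T)$ gives $u\leq\neg^{c}v$ for the first coordinate, while $\sim u\leq c$ and $\lozenge v\vee c=v$ handle the second; and your injectivity argument via $\lozenge x=\lozenge y$, $\lozenge\sim x=\lozenge\sim y$ (hence $\square x=\square y$) and Moisil's principle (Lemma~\ref{MDP}) is sound --- though one can also get it even faster by applying $\sim$ to the second coordinate to obtain $x\wedge c=y\wedge c$ and then invoking distributivity together with $x\vee c=y\vee c$. In short: same key computation for $\neg$, but your write-up replaces the citation-based shortcut with a complete verification, at the cost of length.
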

\begin{proof} 
Let $x \in T$. Then $\beta(x) \in K(C(T))$. Indeed, $x \vee c, \sim x \vee c \in C(T)$ and by Lemmas \ref{auxiliar2} and \ref{t} we have 
\[
(x \vee c) \wedge (\sim x \vee c) = (x \wedge \sim x) \vee c = \lozenge(x \wedge \sim x) \vee c = c.
\]
Now, we show that $\beta$ commutes with the unary operator $\neg$.  By Lemma \ref{auxiliar2}, it follows that 
\[
\beta(\neg x)  =  (\neg x \vee c, \lozenge x \vee c) = (\neg x \vee c, x \vee c) = (\neg^{c}(x \vee c), x \vee c), 
\]
i.e., $\beta(\neg x) = \neg \beta(x)$.    
\end{proof}

According to Lemmas \ref{lemaalpha} and \ref{isobeta} the following theorem is deduced.

\begin{theorem} \label{equivalence theorem}
The functors $K$ and $C$ establish a categorical equivalence between ${\bf{Stone}}$ and ${\bf{SKANc}}$ with natural isomorphisms $\alpha$ and $\beta$.
\end{theorem}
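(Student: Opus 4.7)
The plan is to assemble the equivalence from two ingredients that are already in place: the functoriality of $K$ and $C$, noted in the paragraph preceding the theorem, and the componentwise isomorphisms $\alpha_A$ and $\beta_T$ supplied by Lemmas~\ref{lemaalpha} and~\ref{isobeta}. What remains is to verify that the families $\{\alpha_A\}_{A\in \mathbf{Stone}}$ and $\{\beta_T\}_{T\in \mathbf{SKANc}}$ assemble into natural transformations $\mathrm{id}_{\mathbf{Stone}} \Rightarrow C \circ K$ and $\mathrm{id}_{\mathbf{SKANc}} \Rightarrow K \circ C$; once this is done, the theorem follows by the standard criterion for an equivalence of categories.

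First I would check naturality of $\alpha$. Given a morphism $h : A \to B$ in $\mathbf{Stone}$, I have to verify that the square with vertical arrows $\alpha_A, \alpha_B$ and horizontal arrows $h, C(K(h))$ commutes. Unfolding the definitions, at any $x \in A$ one has $\alpha_B(h(x)) = (h(x), 0)$, while $C(K(h))(\alpha_A(x)) = K(h)(x, 0) = (h(x), h(0)) = (h(x), 0)$, the last equality holding because $h$ preserves $0$.

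Next I would check naturality of $\beta$. For a morphism $f : T \to S$ in $\mathbf{SKANc}$, I need $\beta_S \circ f = K(C(f)) \circ \beta_T$. Evaluated at $x \in T$, the right-hand side gives $(f(x) \vee c_S, \sim f(x) \vee c_S)$, whereas the left-hand side gives $K(C(f))(x \vee c_T, \sim x \vee c_T) = (f(x \vee c_T), f(\sim x \vee c_T)) = (f(x) \vee f(c_T), f(\sim x) \vee f(c_T))$. These agree since the signature of $\mathbf{SKANc}$ contains both $c$ and $\sim$ as basic operations, so every homomorphism satisfies $f(c_T) = c_S$ and $f(\sim x) = \sim f(x)$.

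Combining the pointwise isomorphism statements of Lemmas~\ref{lemaalpha} and~\ref{isobeta} with the two naturality checks above yields the desired categorical equivalence. I do not anticipate any genuine obstacle: all verifications reduce to the fact that $\mathbf{SKANc}$-morphisms preserve the constant $c$ and that $\mathbf{Stone}$-morphisms preserve $0$, both immediate from the respective signatures, so naturality is essentially a formality once the bijectivity at each object has been established.
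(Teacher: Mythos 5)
Your proposal is correct and follows essentially the same route as the paper, which deduces the theorem directly from Lemmas~\ref{lemaalpha} and~\ref{isobeta} together with the functoriality of $K$ and $C$; you merely spell out the naturality squares that the paper leaves implicit, and your verifications (using preservation of $0$, $\sim$, and the center $c$ by homomorphisms) are accurate.
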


\section{Monteiro's construction}\label{sec4}

Given a Nelson algebra, it is always possible to obtain a centered Nelson algebra through a process known as Monteiro's construction, which, notably, was never published. Inspired by this construction, we apply a similar method to obtain a centered Stone KAN-algebra from a Stone KAN-algebra. \
Although our method draws inspiration from Monteiro's approach, it is more general. Consequently, Monteiro's contributions and their corresponding proofs were not applicable to our work.

Let $T \in {\textbf{KAN}}$. Consider the set 
\[
M(T) := \{ (x,y) \in T^{\lozenge} \times T^{\square} \colon x \leq y \}
\]
and the following operators on $M(T)$ defined by:
\begin{eqnarray*}
(a,b) \cup (d,e)   & := & (a \vee d, \square (b \vee e)), \\
(a,b) \cap (d,e)   & := & (\lozenge (a \wedge d), b \wedge e), \\
\approx (a,b)      & := & (\sim b, \sim a), \\
\circledast (a,b)  & := & (\lozenge \neg a, \sim a), \\
0_M                        & := & (0,0), \\
1_M                        & := & (1,1), \\
c_M                        & := & (0,1). \\
\end{eqnarray*}
It follows by Propositions \ref{Prop_3}, \ref{Prop1} and Lemma \ref{auxiliar2} that $(a,b) \cap (d,e), (a,b) \cup (d,e), 0_{M}, 1_{M}, c_{M} \in M(T)$, for all $(a,b), (d,e) \in M(T)$.

\begin{lemma}
Let $T \in {\bf{KAN}}$. If $(a,b) \in M(T)$, then $\approx (a,b), \circledast (a,b) \in M(T)$.
\end{lemma}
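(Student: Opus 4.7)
The plan is to verify the three defining conditions of membership in $M(T)$ for each of the pairs $\approx(a,b)=(\sim b,\sim a)$ and $\circledast(a,b)=(\lozenge\neg a,\sim a)$, namely: the first coordinate lies in $T^{\lozenge}$, the second coordinate lies in $T^{\square}$, and the first coordinate is $\leq$ the second. Throughout, I rely on the hypothesis $(a,b)\in M(T)$, i.e., $\lozenge a=a$, $\square b=b$, and $a\leq b$; the interchange identities $\square x=\sim\lozenge\sim x$ and $\lozenge x=\sim\square\sim x$ noted just after \eqref{nabla}; and the observation recorded in the paragraph before Theorem~\ref{teorema2.11.} that $x\in T^{\lozenge}$ is equivalent to $\sim x=\neg x$.

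For $\approx(a,b)=(\sim b,\sim a)$ the three verifications will be routine. Applying the interchange identity yields $\lozenge\sim b=\sim\square b=\sim b$, so $\sim b\in T^{\lozenge}$; symmetrically $\square\sim a=\sim\lozenge a=\sim a$, so $\sim a\in T^{\square}$. Finally, $a\leq b$ gives $\sim b\leq\sim a$ by antitonicity of $\sim$ in any Kleene algebra.

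For $\circledast(a,b)=(\lozenge\neg a,\sim a)$ the first coordinate lies in $T^{\lozenge}$ thanks to $\lozenge\lozenge\neg a=\lozenge\neg a$, which is item~(1) of Proposition~\ref{Prop_3}. The crucial step is the second coordinate: I will use that $a\in T^{\lozenge}$ forces $\sim a=\neg a$, so $\square\sim a=\neg\sim\sim a=\neg a=\sim a$, placing $\sim a$ in $T^{\square}$. The inequality $\lozenge\neg a\leq\sim a$ then follows by combining $\lozenge x\leq x$ from item~(6) of Proposition~\ref{Prop1} with the identification $\neg a=\sim a$ just used.

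No step presents a genuine obstacle; the only subtlety worth flagging is the systematic use of the equivalence $x\in T^{\lozenge}\iff\sim x=\neg x$, without which the verification that $\sim a\in T^{\square}$ for $\circledast(a,b)$ would not be immediate. Everything else is a direct application of the interchange between $\lozenge$ and $\square$ together with idempotency of $\lozenge$ on its image.
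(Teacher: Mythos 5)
Your proof is correct and follows essentially the same route as the paper: you check coordinatewise that the entries lie in $T^{\lozenge}$ and $T^{\square}$ and that the required inequality holds, using $\lozenge a=a$, $\square b=b$, $a\leq b$, the identity $\neg a=\sim a$ for $a\in T^{\lozenge}$, and $\lozenge x\leq x$. The only (immaterial) variation is that you verify $\sim b\in T^{\lozenge}$ and $\sim a\in T^{\square}$ via the interchange identities $\lozenge x=\sim\square\sim x$, $\square x=\sim\lozenge\sim x$, where the paper invokes $T^{\square}=\sim(T^{\lozenge})$ and the computation $\square\sim a=\neg\sim\sim a=\neg a=\sim a$.
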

\begin{proof}
If $(a,b)\in M(T)$, then $\lozenge a = a$, $\square b = b$ and $a \leq b$. First we prove that $\approx (a,b) \in M(T)$. Since $a \leq b$, we have $\sim b \leq \sim a$. As $b \in T^{\square}$, then $\sim b \in T^{\lozenge}$. On the other hand, $a \in T^{\lozenge}$ implies $\neg a = \sim a$ and $\square \sim a = \neg \sim \sim a = \neg a = \sim a$. Thus, $\sim a \in T^{\square}$ and $\approx (a,b) \in M(T)$.

Now, we see $\circledast (a,b) \in M(T)$. It is clear that $\lozenge \neg a \in T^{\lozenge}$. Since $\square \sim a = \sim a$, we have $\sim a \in T^{\square}$. So, by Proposition \ref{Prop1}, it follows $\lozenge \neg a \leq \neg a = \sim a$, i.e., $\lozenge \neg a \leq \sim a$ and $\circledast (a,b) \in M(T)$.
\end{proof}

We can consider the structure $\langle M(T), \cup, \cap, \approx, \circledast, c_{M}, 0_{M}, 1_{M} \rangle$. Then the order $\subseteq$ on $M(T)$ is given by $(a,b) \subseteq (d,e)$ if and only of $a \leq d$ and $b \leq e$.

\begin{theorem} \label{teoM}
Let $T \in {\bf{KAN}}$. Then $M(T) \in {\bf{KANc}}$.
\end{theorem}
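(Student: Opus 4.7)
The plan is to verify, in order, that $M(T)$ equipped with $\cup, \cap, 0_{M}, 1_{M}$ is a bounded distributive lattice, that $\approx$ satisfies the Kleene axioms (K1)--(K3), that $\circledast$ together with $\approx$ satisfies the axioms (N1)--(N5) of Definition \ref{defKAN}, and finally that $\approx c_{M} = c_{M}$. The unifying strategy is to push every computation back into $T$ itself by exploiting the characterisations $T^{\lozenge} = \{x : \lozenge x = x\}$ and $T^{\square} = \{x : \square x = x\}$, the identities $\lozenge x = \sim \neg x$ and $\square x = \neg \sim x$, the modal laws of Proposition \ref{Prop_3}, and the consequence $\neg \lozenge x = \neg x$ of Proposition \ref{Prop1}(8).

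For the lattice axioms, associativity of $\cup$ reduces to $\square(\square(b \vee e) \vee g) = \square(b \vee e \vee g)$, which follows from Proposition \ref{Prop_3}(6) together with $g = \square g$; associativity of $\cap$ is dual via Proposition \ref{Prop_3}(3). Absorption uses $\lozenge x \leq x \leq \square x$ from Proposition \ref{Prop1}(6), and the non-trivial part of distributivity reduces to $b \wedge \square(e \vee g) = \square(b \wedge (e \vee g))$ via Proposition \ref{Prop_3}(5) and $b = \square b$. Then (K1) is $\sim\sim = \mathrm{id}$, (K2) is componentwise De Morgan, and (K3) follows at once from the chain $(a,b) \cap \approx(a,b) \subseteq c_{M} \subseteq (d,e) \cup \approx(d,e)$: the first coordinate on the left is $\lozenge(a \wedge \sim b) = 0$ because $a \leq b$ forces $a \wedge \sim b \leq a \wedge \sim a$ and Lemma \ref{t} applies, while dually the second coordinate on the right is $\square(e \vee \sim d) = 1$ because $e \vee \sim d \geq e \vee \sim e$ and $\square(x \vee \sim x) = 1$ by Proposition \ref{Prop1}(3).

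For the negation axioms, I would rely on the simplification $\circledast(\lozenge u, v) = (\lozenge \neg u, \neg u)$, obtained from $\neg \lozenge u = \neg u$ (Proposition \ref{Prop1}(8)) and $\sim \lozenge u = \neg u$. Combined with $\lozenge(\lozenge x \wedge \lozenge y) = \lozenge(x \wedge y)$ from Proposition \ref{Prop_3}(3), this collapses the iterated $\circledast \cap \circledast$ expressions appearing in (N1), (N2) and (N5), reducing each of them directly to the corresponding axiom of $T$. Axiom (N3) in $M(T)$ follows from (N3) in $T$ together with $\lozenge(a \wedge \sim a) = 0 = \lozenge(a \wedge \neg a)$ from Lemma \ref{t}; and (N4) uses $\sim x \leq \neg x$ from (N4) of $T$ plus the observation that $b \in T^{\square}$ forces $\sim b = \lozenge \sim b$, so that $\sim b \leq \lozenge \neg a$. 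Finally, $\approx c_{M} = \approx(0,1) = (\sim 1, \sim 0) = (0,1) = c_{M}$, so $c_{M}$ is the center.

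The main obstacle will be the bookkeeping in verifying (N1) and (N5): both sides contain nested applications of $\circledast$ separated by $\cap$ (and by $\approx$ in (N5)), so without the normal form $\circledast(\lozenge u, v) = (\lozenge \neg u, \neg u)$ the intermediate expressions rapidly become unwieldy. Once this reduction is in place, however, the corresponding axioms of $T$ close each case mechanically.
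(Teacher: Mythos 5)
Your proposal is correct and takes essentially the same route as the paper: a componentwise verification that $M(T)$ is a bounded distributive lattice, a Kleene algebra, and satisfies (N1)--(N5), with every computation pushed back into $T$ via Propositions \ref{Prop1}, \ref{Prop_3} and Lemma \ref{t}, and with $c_{M}=(0,1)$ checked to be the center. The only cosmetic differences are that the paper verifies the lattice structure through Sholander's two identities rather than the full axiom list, and carries out inline the computations that your normal form $\circledast(\lozenge u, v) = (\lozenge \neg u, \neg u)$ streamlines (note that the collapse in (N1) also invokes (N5) of $T$ to strip the inner $\lozenge$, exactly as the paper does).
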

\begin{proof}
Let $(a,b), (d,e) \in M(T)$. Then $\lozenge a = a$, $\square b = b$, $a \leq b$, $\lozenge d = d$, $\square e = e$ and $d \leq e$. We prove that $M(T)$ is a distributive lattice.
First we see 
\[
(a,b) \cap \left[ ((a,b) \cup (d,e)) \right] = (a,b),
\]
i.e., $\lozenge(a \wedge (a \vee d)) = a$ and $b \wedge \square(b \vee e) = b$. Indeed, since $a \wedge (a \vee d) = a$ is valid in $T$, then we have $\lozenge(a \wedge (a \vee d)) = \lozenge a = a$. On the other hand, it follows $b \leq b \vee e \leq \square(b \vee e)$ and $b \wedge \square(b \vee e) = b$.

Let $(f,g) \in M(T)$. So, $\lozenge f = f$, $\square g = g$ and $f \leq g$. We prove 
\[
(a,b) \cap [(d,e) \cup (f,g)] = [(f,g) \cap (a,b)] \cup [(d,e) \cap (a,b)].
\]
It will be enough to try $\lozenge(a \wedge (d \vee f)) = \lozenge(f \wedge a) \vee \lozenge(d \wedge a)$ and $b \wedge \square(e \vee g) = \square((g \wedge b) \vee (e \wedge b))$. Since $a \wedge (d \vee f) = (f \wedge a) \vee (d \wedge a)$ and $b \wedge (e \vee g) = (g \wedge b) \vee (e \wedge b)$ are valid in $T$, then by Proposition \ref{Prop_3} we have $\lozenge(a \wedge (d \vee f)) = \lozenge(f \wedge a) \vee \lozenge(d \wedge a)$ and $b \wedge \square(e \vee g) = \square b \wedge \square(e \vee g) = \square((g \wedge b) \vee (e \wedge b))$. So, by Sholander's conditions given in \cite{Sholander}, the structure $\langle M(T), \cup, \cap, (0,0), (1,1) \rangle$ is a bounded distributive lattice. 

Now, we prove that $\langle M(T), \cup, \cap, \approx, (0,0), (1,1) \rangle$ is a Kleene algebra. Let $(a,b), (d,e) \in M(T)$.
\begin{itemize}
\item[(K1):] It is immediate that $\approx \approx (a,b) = (a, b)$.

\item[(K2):] Since $\sim \square x = \lozenge \sim x$, we have 
\[
\begin{array}{lllll}
\approx [(a,b) \cup (d,e)] & = & \approx (a \vee d, \square(b \vee e)) \\
                                      & = & (\sim \square(b \vee e), \sim (a \vee d)) \\
                                      & = & (\lozenge (\sim b \wedge \sim e), \sim a \wedge \sim d)) \\
                                      & = & (\sim b, \sim a) \cap (\sim e, \sim d) \\
                                       & = & \approx (a, b) \cap \approx (d, e).\\
\end{array}   
\]
Then $\approx [(a,b) \cup (d,e)] = \approx (a, b) \cap \approx (d, e)$.

\item[(K3):] As $a \leq b$, by Lemma \ref{t} we have $\lozenge(a \wedge \sim b) \leq \lozenge(b \wedge \sim b) = 0$ and $\lozenge(a \wedge \sim b) = 0$. Moreover, by $d \leq e$ and Lemma \ref{t}, it follows that $1 = \square(d \vee \sim d) \leq \square(e \vee \sim d)$. So, $\square(e \vee \sim d) = 1$. Then 
\[
\begin{array}{lllll}
(a,b) \cap \approx (a,b) & =  (a,b) \cap (\sim b, \sim a) \\
                                  & = (\lozenge (a \wedge \sim b), b \wedge \sim a) \\
                                  & \subseteq  (d \vee \sim e, \square(e \vee \sim d)) \\
                                  & = (d,e) \cup (\sim e, \sim d) \\
                                  & = (d,e) \cup \approx (d,e). \\
\end{array}
\]    
\end{itemize}

Finally, let us prove axioms (N1)-(N5) of Definition \ref{defKAN}.
\begin{itemize}
\item[(N1):] Note that $\neg (a \wedge \lozenge \neg \lozenge(a \wedge d)) = \neg (a \wedge \lozenge \neg d)$. Indeed, by (N5), Proposition \ref{Prop1} and (N1) we have 
\[
\begin{array}{lllll}
\neg (a \wedge \lozenge \neg \lozenge(a \wedge d)) & = & \neg (a \wedge \sim \neg \neg \lozenge(a \wedge d)) \\
                                                    & = & \neg (a \wedge \neg \lozenge(a \wedge d)) \\
                                                    & = & \neg (a \wedge \neg (a \wedge d)) \\
                                                    & = & \neg (a \wedge \neg d) \\
                                                    & = & \neg (a \wedge \sim \neg \neg d) \\
                                                    & = & \neg (a \wedge \lozenge \neg d). \\
\end{array}    
\]
Then,  
\[
\begin{array}{llllll}
\circledast ((a,b) \cap \circledast ((a,b) \cap (d,e))) & = & \circledast ((a,b) \cap \circledast (\lozenge(a \wedge d), b \wedge e)) \\
                                                                               & = & \circledast ((a,b) \cap (\lozenge \neg \lozenge(a \wedge d), \sim \lozenge(a \wedge d))) \\
                                                                               & = & \circledast (\lozenge(a \wedge \lozenge \neg \lozenge(a \wedge d)), b \wedge \sim \lozenge(a \wedge d))\\
                                                                              & = & (\lozenge \neg \lozenge(a \wedge \lozenge \neg \lozenge(a \wedge d)), \sim \lozenge(a \wedge \lozenge \neg \lozenge(a \wedge d))) \\
                                                    & = & (\lozenge \neg \lozenge (a \wedge \lozenge \neg d), \sim \lozenge(a \wedge \lozenge \neg d)) \\
                                                    & = & \circledast (\lozenge(a \wedge \lozenge \neg d), b \wedge \sim d) \\
                                                    & = & \circledast ((a,b) \cap (\lozenge \neg d, \sim d)) \\
                                                    & = & \circledast((a,b) \cap \circledast (d,e)). \\                                                                                                        
\end{array}    
\]

\item[(N2):] By (N2) and Proposition \ref{Prop_3} we have 
\begin{align*}
\circledast ((a,b) \cup (d,e)) & =  \circledast (a \vee d, \square(b \vee e)) \\
                                                    & = (\lozenge \neg (a \vee d), \sim (a \vee d)) \\
                                                    & = (\lozenge (\neg a \wedge \neg d), \sim a \wedge \sim d) \\
                                                    & = (\lozenge( \lozenge \neg a \wedge \lozenge \neg d), \sim a \wedge \sim d) \\
                                                    & = (\lozenge \neg a, \sim a) \cap (\lozenge \neg d, \sim d) \\
                                                    & = \circledast (a,b) \cap \circledast (d,e). \\
\end{align*}    

\item[(N3):] First we prove that $\lozenge(a \wedge \lozenge \neg a) = \lozenge(a \wedge \sim b)$. Since $a \leq b$, by (N5), (N1) and (N3) we have 
\[
\begin{array}{lllll}
\neg (a \wedge \sim \neg \neg a) & = & \neg (a \wedge \neg a) \\
                                                    & = & \neg (a \wedge \neg (a \wedge b)) \\
                                                    & = & \neg (a \wedge \neg b) \\
                                                    & = & \neg (a \wedge b \wedge \neg b) \\
                                                    & = & \neg (a \wedge b \wedge \sim b) \\
                                                    & = & \neg (a \wedge \sim b). \\
\end{array}    
\]
So, $\neg (a \wedge \sim \neg \neg a) = \neg (a \wedge \sim b)$ and $\lozenge(a \wedge \lozenge \neg a) = \lozenge(a \wedge \sim b)$. Then
\[
\begin{array} {lllll}
(a,b) \cap \approx (a,b) & = & (a,b) \cap (\sim b, \sim a) \\
                                     & = & (\lozenge(a \wedge \sim b), b \wedge \sim a) \\
                                     & = & (\lozenge(a \wedge \lozenge \neg a), b \wedge \sim a) \\
                                     & = & (a,b) \cap (\lozenge \neg a, \sim a) \\
                                     & = & (a,b) \cap \circledast (a,b). \\
\end{array}    
\]

\item[(N4):] Since $\lozenge a = a$ and $a \leq b$, it follows $\neg a = \sim a$ and $\sim b \leq \neg a$. So, we have $\neg \neg a \leq \neg \sim b = \square b = b$ and $\sim b \leq \sim \neg \neg a$. Then 
\[
\begin{array}{lllll}
\approx (a,b) & = & (\sim b, \sim a) \\
                     & \subseteq & (\sim \neg \neg a, \sim a) \\
                     & = & (\lozenge \neg a, \sim a) \\
                     & = & \circledast (a,b). \\
\end{array}
\]    

\item [(N5):] We have 
\[
\begin{array}{lllll}
\circledast ((a,b) \cap (d,e)) & = & \circledast (\lozenge(a \wedge d), b \wedge e) \\
                                            & = & (\lozenge \neg \lozenge(a \wedge d), \sim \lozenge(a \wedge d)) \\
                                            & = & \circledast (\lozenge(a \wedge d), \sim \lozenge \neg a \wedge e) \\
                                            & = & \circledast ((a, \sim \lozenge \neg a) \cap (d,e)) \\
                                            & = & \circledast ( \approx (\lozenge \neg a, \sim a) \cap (d,e)) \\
                                           & = & \circledast ( (\approx \circledast (a,b))  \cap (d,e)). \\
\end{array}   
\] 
\end{itemize}

It is easy to see that $c_{M} = (0,1)$ is the center of $M(T)$. Then $M(T)$ is a KANc-algebra.
\end{proof}

\begin{remark}
Let ${\bf{KAN}}$ denote the category of KAN-algebras and ${\bf{KANc}}$ the category of centered KAN-algebras. In both cases, morphisms are the corresponding algebra homomorphisms. Notably, if $T_1$ and $T_2$ are centered KAN-algebras and $f \colon T_1 \to T_2$ is a morphism in ${\bf{KAN}}$, then $f$ necessarily preserves the center. Indeed, we have $f(c) = f(\sim c) = \sim f(c)$ and thus, by the uniqueness of the center, $f(c) = c$. Therefore, ${\bf{KANc}}$ is a full subcategory of ${\bf{KAN}}$. Additionally, if $f \colon T_1 \to T_2$ is a morphism in ${\bf{KAN}}$, then $M(f) \colon M(T_1) \to M(T_2)$ defined by $M(f)(x,y) = (f(x), f(y))$ for all $(x, y) \in M(T_1)$, is a morphism in ${\bf{KANc}}$. Moreover, by Theorem \ref{teoM}, $M$ is a functor from the category ${\bf{KAN}}$ to ${\bf{KANc}}$.
\end{remark}

In the following result we prove a necessary and sufficient conditions for $M(T)$ to be a Stone KAN-algebra.

\begin{lemma} \label{lemaM}
Let $T \in {\bf{KAN}}$. Then $T$ is a Stone KAN-algebra if and only $M(T)$ is a Stone KAN-algebra. 
\end{lemma}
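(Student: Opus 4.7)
The plan is to reduce the Stone KAN equation for $M(T)$ to the Stone KAN equation in $T$ by direct computation, using Proposition \ref{Prop1}(8) and Moisil's determination principle.

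First, I would compute $\circledast \circledast (a,b)$ for $(a,b) \in M(T)$. Since $\circledast(a,b) = (\lozenge \neg a, \sim a)$, applying $\circledast$ again gives $(\lozenge \neg \lozenge \neg a, \sim \lozenge \neg a)$. By Proposition \ref{Prop1}(8), $\neg \lozenge \neg a = \neg \neg a$, so the first coordinate is $\lozenge \neg \neg a$. For the second coordinate, $\lozenge \neg a = \sim \neg \neg a$ yields $\sim \lozenge \neg a = \neg \neg a$. Hence $\circledast \circledast (a,b) = (\lozenge \neg \neg a, \neg \neg a)$.

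Next, I would compute the join:
\[
\circledast(a,b) \cup \circledast \circledast (a,b) = (\lozenge \neg a \vee \lozenge \neg \neg a,\ \square(\sim a \vee \neg \neg a)).
\]
Since $a \in T^{\lozenge}$, the identity $\sim a = \neg a$ holds, and by Proposition \ref{Prop_3}(2) the first coordinate simplifies to $\lozenge(\neg a \vee \neg \neg a)$. Thus the join equals $1_M = (1,1)$ precisely when $\lozenge(\neg a \vee \neg \neg a) = 1$ and $\square(\neg a \vee \neg \neg a) = 1$, which by Moisil's determination principle (Lemma \ref{MDP}) is equivalent to $\neg a \vee \neg \neg a = 1$.

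With this reduction, both implications are essentially immediate. For the forward direction, if $T$ satisfies equation \ref{stoneKAN}, then in particular $\neg a \vee \neg \neg a = 1$ for every $a \in T^{\lozenge}$, so the above computation gives $\circledast(a,b) \cup \circledast \circledast (a,b) = 1_M$ for every $(a,b) \in M(T)$. For the converse, given any $x \in T$, I would apply the Stone KAN identity for $M(T)$ to the pair $(\lozenge x, \square x) \in M(T)$ (valid by Propositions \ref{Prop1}(6) and \ref{Prop_3}(1),(4)); using Proposition \ref{Prop1}(8) to replace $\neg \lozenge x$ by $\neg x$ and $\neg \neg \lozenge x$ by $\neg \neg x$, the identity $\neg x \vee \neg \neg x = 1$ follows. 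The only subtlety worth being careful with is the step involving $\sim a = \neg a$ for $a \in T^{\lozenge}$, which is exactly what allows the second coordinate to collapse into the same expression as the first.
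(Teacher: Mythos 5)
Your proposal is correct and follows essentially the same route as the paper: compute $\circledast(a,b)\cup\circledast\circledast(a,b)$, simplify both coordinates to $\lozenge(\neg a\vee\neg\neg a)$ and $\square(\neg a\vee\neg\neg a)$ using Proposition \ref{Prop1}(8) and $\sim a=\neg a$ on $T^{\lozenge}$, and for the converse apply the identity to $(\lozenge x,\square x)$ and invoke Moisil's determination principle. The only difference is presentational (you package the computation as a single equivalence before splitting into the two implications), not mathematical.
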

\begin{proof}
Let $(a,b)\in M(T)$. Then $\lozenge a=a$, $\square b=b$ and $a \leq b$. So, $\sim a = \neg a$. By Proposition \ref{Prop1} we have $\neg \neg a = \neg \lozenge \neg a$. Hence, by Proposition \ref{Prop1} and \ref{Prop_3}, it follows 
\[
\begin{array}{llllll}
\circledast(a,b) \cup \circledast\circledast(a,b) &=& (\lozenge \neg a, \sim a) \cup (\lozenge \neg \lozenge \neg a, \sim \lozenge \neg a) \\
                                                                         &=& (\lozenge \neg a \vee \lozenge \neg \lozenge \neg a, \square(\sim a \vee \sim \lozenge \neg a)) \\
                                                                         &=& (\lozenge(\neg a \vee \neg \lozenge \neg a), \square (\neg a \vee \neg \neg a))\\
                                                                         &=& (\lozenge(\neg a \vee \neg \neg a), \square 1) \\
                                                                         &=& (\lozenge 1, \square 1) \\
                                                                         &=& (1, 1).
\end{array}
\]
Then $M(T)$ is Stone KAN-algebra. Now, suppose that $T$ satisfies the equation \ref{stoneKAN}. Let $a \in T$. Then $(\lozenge a, \square a) \in M(T)$ and by Proposition \ref{Prop1} and \ref{Prop_3},
\[
\begin{array}{lllll}
(1,1)  &=& \circledast(\lozenge a,\square a) \cup \circledast\circledast(\lozenge a,\square a) \\
         &=& (\lozenge \neg \lozenge a, \sim \lozenge a) \cup (\lozenge \neg \lozenge \neg \lozenge a, \sim \lozenge \neg \lozenge a) \\
         &=& (\lozenge \neg \lozenge a \vee \lozenge \neg \lozenge \neg \lozenge a, \square (\sim \lozenge a \vee \sim \lozenge \neg a)) \\
         &=& (\lozenge (\neg \lozenge a \vee \neg \lozenge \neg \lozenge a), \square (\sim \lozenge a \vee \sim \lozenge \neg a)) \\
         &=& (\lozenge (\neg a \vee \neg \neg a), \square (\neg a \vee \neg \neg a)).
\end{array}
\]
So, it follows that $\lozenge (\neg a \vee \neg \neg a) = \lozenge 1$ and $\square (\neg a \vee \neg \neg a) = \square 1$. Then, by Moisil's determination principle of Lemma \ref{MDP}, we have $\neg a \vee \neg \neg a = 1$ and $T$ is Stone KAN-algebra.
\end{proof}

\begin{corollary}\label{M(T)}
Let $T \in {\bf{SKAN}}$. Then $M(T) \in {\bf{SKANc}}$.
\end{corollary}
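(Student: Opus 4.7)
The statement to prove, Corollary \ref{M(T)}, asserts that if $T \in \mathbf{SKAN}$ then $M(T) \in \mathbf{SKANc}$. This is essentially a direct assembly of two results that immediately precede it, so my plan is to simply combine them rather than redo any of the calculations.

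First, I would invoke Theorem \ref{teoM}, which establishes that for any $T \in \mathbf{KAN}$ the structure $M(T)$ lies in $\mathbf{KANc}$, i.e., $M(T)$ is a centered KAN-algebra (with center $c_M = (0,1)$). Since every Stone KAN-algebra is in particular a KAN-algebra, this applies to $T \in \mathbf{SKAN}$, yielding $M(T) \in \mathbf{KANc}$.

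Next, I would apply Lemma \ref{lemaM}, which gives the equivalence: $T$ satisfies the Stone identity \eqref{stoneKAN} if and only if $M(T)$ does. Because $T \in \mathbf{SKAN}$, the forward direction of that lemma guarantees that $M(T)$ satisfies \eqref{stoneKAN} as well, hence $M(T)$ is itself a Stone KAN-algebra.

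Putting the two pieces together, $M(T)$ is simultaneously a centered KAN-algebra and a Stone KAN-algebra, which is precisely the definition of $\mathbf{SKANc}$. There is no real obstacle here since all the work has already been done in Theorem \ref{teoM} and Lemma \ref{lemaM}; the only thing to note is that the center $(0,1)$ from Theorem \ref{teoM} persists in the Stone setting, so the ``centered'' part of $\mathbf{SKANc}$ is witnessed by the same element as in $\mathbf{KANc}$.
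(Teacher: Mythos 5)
Your proof is correct and follows exactly the paper's own argument, which likewise combines Theorem \ref{teoM} and Lemma \ref{lemaM}. Nothing further is needed.
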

\begin{proof} 
It follows from Theorem \ref{teoM} and Lemma \ref{lemaM}.
\end{proof}

\begin{theorem}
Let $T \in {\bf{SKAN}}$. Then $T$ is isomorphic to a subalgebra of some centered Stone KAN-algebra.
\end{theorem}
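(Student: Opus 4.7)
The plan is to exhibit an explicit injective homomorphism $\mu \colon T \to M(T)$, since by Corollary \ref{M(T)} the algebra $M(T)$ already lies in $\mathbf{SKANc}$. The natural candidate, motivated by part (9) of Proposition \ref{Prop_3} and by Moisil's determination principle, is
\[
\mu(x) := (\lozenge x, \square x).
\]

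First I would check that $\mu(x) \in M(T)$ for every $x \in T$. This is immediate: $\lozenge x \in T^{\lozenge}$ by Proposition \ref{Prop_3}(1), $\square x \in T^{\square}$ by Proposition \ref{Prop_3}(4), and $\lozenge x \leq x \leq \square x$ by Proposition \ref{Prop1}(6). Next, I would verify that $\mu$ preserves all operations and constants. The lattice part reduces to four identities already collected in Proposition \ref{Prop_3}: $\lozenge(x\vee y) = \lozenge x \vee \lozenge y$ and $\square(\square x \vee \square y)=\square(x\vee y)$ give $\mu(x\vee y) = \mu(x)\cup\mu(y)$; similarly $\lozenge(\lozenge x \wedge \lozenge y) = \lozenge(x\wedge y)$ and $\square(x\wedge y) = \square x \wedge \square y$ give $\mu(x\wedge y) = \mu(x)\cap\mu(y)$. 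For the Kleene involution, one uses $\sim\lozenge x = \square\sim x$ and $\sim\square x = \lozenge\sim x$ to obtain $\approx\mu(x) = (\sim\square x,\sim\lozenge x) = \mu(\sim x)$. For the intuitionistic negation, Proposition \ref{Prop1}(8) yields $\sim\lozenge x = \neg x = \square\neg x$, so
\[
\circledast\mu(x) = (\lozenge\neg\lozenge x,\sim\lozenge x) = (\lozenge\neg x,\neg x) = (\lozenge\neg x,\square\neg x) = \mu(\neg x).
\]
Preservation of $0$ and $1$ follows from parts (4) and (5) of Proposition \ref{Prop1}.

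Injectivity is the cleanest point: if $\mu(x) = \mu(y)$, then $\lozenge x = \lozenge y$ and $\square x = \square y$, so Moisil's determination principle (Lemma \ref{MDP}) yields $x = y$. Consequently $\mu(T)$ is a subalgebra of $M(T)$ isomorphic to $T$, and since $M(T) \in \mathbf{SKANc}$, the theorem follows.

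The argument has no real obstacle; the only mildly delicate point is lining up the Kleene and intuitionistic negation components of $M(T)$ with $\lozenge$ and $\square$ in $T$, which is precisely what the identity $\sim\lozenge = \square\sim$ and Proposition \ref{Prop1}(8) are designed to handle. I would not need the Stone-type axiom \eqref{stoneKAN} anywhere in the embedding itself; it is used only indirectly, through Corollary \ref{M(T)}, to ensure that the ambient algebra $M(T)$ is a \emph{centered Stone} KAN-algebra rather than merely a centered KAN-algebra.
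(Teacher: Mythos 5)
Your proposal is correct and follows essentially the same route as the paper: the paper's proof also uses the embedding $\delta(x) := (\lozenge x, \square x)$ into $M(T)$, verifies the homomorphism conditions via Propositions \ref{Prop1} and \ref{Prop_3}, obtains injectivity from Moisil's determination principle (Lemma \ref{MDP}), and relies on Corollary \ref{M(T)} to place $M(T)$ in $\mathbf{SKANc}$.
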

\begin{proof}
By Corollary \ref{M(T)}, the structure $M(T)$ is a Stone KANc-algebra. Consider the map $\delta \colon T \to M(T)$ defined by $\delta(x) := (\lozenge x, \square x)$. Using Propositions \ref{Prop1} and \ref{Prop_3}, we can show that $\delta$ is well-defined. Additionally, the equalities $\delta(0) = (0,0)$ and $\delta(1) = (1,1)$ are straightforward. Furthermore, by Lemma \ref{lemacentro}, we obtain $\delta(c) = (0,1)$. Furthermore, by Proposition \ref{Prop_3}, we have 
\[
\begin{array}{lllll}
 \delta(x) \cup \delta(y) & = & (\lozenge x, \square x) \cup (\lozenge y, \square y) \\
                                    & = & (\lozenge x \vee \lozenge y, \square (\square x \vee \square y)) \\
                                    & = & (\lozenge (x \vee y), \square (x \vee y)) \\
                                    & = & \delta(x \vee y).
\end{array}
\]
Similarly, $\delta(x \wedge y) = \delta(x) \cap \delta(y)$. Using notations \ref{Delta} and \ref{nabla}, we obtain $\delta(\sim x) = \approx \delta(x)$. Finally, by Proposition \ref{Prop1},
\[
\circledast \delta(x) = (\lozenge \neg \lozenge x, \sim \lozenge x) = (\lozenge \neg x, \square \neg x) = \delta(\neg x).
\]
Thus, $\delta$ is a homomorphism. The injectivity of $\delta$ follows from Lemma \ref{MDP}.
\end{proof}

The following result illustrates the relationship between Kalman’s and Monteiro’s constructions in the context of Stone KAN-algebras.

\begin{theorem} \label{teoisoMK}
Let $T \in {\bf{SKAN}}$. Then the centered Stone KAN-algebras $M(T)$ and $K(T^{\lozenge}) $ are isomorphic.
\end{theorem}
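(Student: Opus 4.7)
The plan is to construct an explicit isomorphism $\Phi \colon M(T) \to K(T^{\lozenge})$. Recall that $T^{\square} = \sim(T^{\lozenge})$, so the natural way to move from a pair $(a,b) \in T^{\lozenge} \times T^{\square}$ to a pair in $T^{\lozenge} \times T^{\lozenge}$ is to replace $b$ by $\sim b$. Accordingly, I would define
\[
\Phi(a,b) := (a, \sim b)
\]
and conjecture that its inverse is $\Psi(x,y) := (x, \sim y)$, treating $K(T^{\lozenge})$ as the Kalman algebra built from the Stone algebra $T^{\lozenge} = \langle T^{\lozenge}, \vee, \wedge^{\lozenge}, \neg^{\lozenge}, 0, 1\rangle$ of Theorem~\ref{teorema2.11.}.

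The first step is well-definedness. For $\Phi$: if $(a,b) \in M(T)$, then $\sim b \in T^{\lozenge}$ because $b \in T^{\square}$, and from $a \leq b$ together with Lemma~\ref{t} I get $\lozenge(a \wedge \sim b) \leq \lozenge(b \wedge \sim b) = 0$, so $(a, \sim b)$ lies in $K(T^{\lozenge})$ (i.e., $a \wedge^{\lozenge} \sim b = 0$). For $\Psi$: if $(x,y) \in K(T^{\lozenge})$, then $\sim y \in T^{\square}$, and the condition $x \leq \sim y$ needed to land in $M(T)$ is exactly Lemma~\ref{lemma2.23} applied to $x,y \in T^{\lozenge}$ with $\lozenge(x \wedge y) = 0$. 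The identities $\Phi\Psi = \mathrm{id}$ and $\Psi\Phi = \mathrm{id}$ are then immediate from $\sim\sim = \mathrm{id}$.

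Next I would check compatibility with each operation by direct computation, using the definitions of $\cup, \cap, \approx, \circledast$ in $M(T)$ and of $\vee, \wedge, \sim, \neg$ in $K(T^{\lozenge})$, together with the identities $\sim \square z = \lozenge \sim z$ and $\sim(z \wedge w) = \sim z \vee \sim w$. Specifically:
\begin{itemize}
\item $\Phi((a,b) \cup (d,e)) = (a \vee d, \sim \square(b \vee e)) = (a \vee d, \lozenge(\sim b \wedge \sim e)) = \Phi(a,b) \vee \Phi(d,e)$;
\item $\Phi((a,b) \cap (d,e)) = (\lozenge(a \wedge d), \sim b \vee \sim e) = \Phi(a,b) \wedge \Phi(d,e)$;
\item $\Phi(\approx\!(a,b)) = (\sim b, a) = \sim\Phi(a,b)$;
\item $\Phi(\circledast (a,b)) = (\lozenge \neg a, a) = (\neg^{\lozenge} a, a) = \neg \Phi(a,b)$;
\item $\Phi(0_M) = (0,1) = 0_K$, $\Phi(1_M) = (1,0) = 1_K$, $\Phi(c_M) = (0,0) = c_K$.
\end{itemize}

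None of these verifications is an obstacle; the only steps that are not purely formal are the well-definedness arguments, and those reduce cleanly to Lemma~\ref{t} (for $\Phi$) and Lemma~\ref{lemma2.23} (for $\Psi$). Once the computations above are recorded, $\Phi$ is a bijective homomorphism of centered Stone KAN-algebras, which establishes the desired isomorphism $M(T) \cong K(T^{\lozenge})$.
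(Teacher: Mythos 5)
Your proposal is correct and matches the paper's own proof essentially verbatim: the paper uses the same map $t(x,y):=(x,\sim y)$, establishes well-definedness via Lemma~\ref{t} and surjectivity via Lemma~\ref{lemma2.23} (you package these as the two well-definedness checks for $\Phi$ and its inverse $\Psi$), and verifies compatibility with the operations by the same direct computations. No gaps; your version is just slightly more explicit in writing out the inverse and all the operation checks that the paper leaves to the reader.
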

\begin{proof}
Consider the map $t \colon M(T) \to K(T^{\lozenge})$ given by $t(x, y) := (x, \sim y)$. Let us prove that $t$ is an isomorphism. We need to show that $t$ is well-defined. If $(a,b) \in M(T)$, then $\lozenge a = a$, $\square b = b$ and $a \leq b$. It follows that $\sim b \in T^{\lozenge}$. Since $a \wedge \sim b \leq b \wedge \sim b$, by Lemmas \ref{Prop2} and \ref{t} we deduce that $\lozenge(a \wedge \sim b) \leq \lozenge(b \wedge \sim b) = 0$, i.e., $\lozenge(a \wedge \sim b) = 0$. Hence, $a \wedge^{\lozenge} \sim b = \lozenge(a \wedge \sim b) = 0$ and $t(x,y) = (x, \sim y) \in K(T^{\lozenge})$.

Next, we prove that $t$ is surjective. Let $(a,b) \in K(T^{\lozenge})$. Then $a \in T^{\lozenge}$, $b \in T^{\lozenge}$ and $a \wedge^\lozenge b = \lozenge(a \wedge b) = 0$. By Lemma \ref{lemma2.23} it follows $a \leq \sim b$. So, since $b \in T^{\lozenge}$, we have $\sim b \in T^{\square}$. Thus, $(a, \sim b) \in M(T)$ and $t(a, \sim b) = (a, b)$. The injectivity of $t$ is straightforward.

Finally, we see that $t$ is a homomorphism. Let $(a, b), (d, e) \in M(T)$. Then, by Proposition \ref{Prop_3}, we have
\[
\begin{array}{lllll}
t(\circledast (a,b)) & = & t(\lozenge \neg^{\lozenge} a, \sim a) \\
                             & = & (\lozenge \lozenge \neg a, a) \\
                             & = &  (\lozenge \neg a, a) \\
                             & = & (\neg^{\lozenge} a, a) \\
                             & = & \neg t(a,b) 
\end{array}
\]
and
\[
\begin{array}{llllll}
t((a,b) \cap (d,e)) & = & t(\lozenge(a \wedge d), b \wedge e) \\
                            & = &  (\lozenge(a \wedge d), \sim (b \wedge e)) \\
                            & = & (a \wedge^{\lozenge} d, \sim b \vee \sim e)) \\
                            & = & t(a,b) \wedge t(d,e).
\end{array}
\]
Therefore, $t$ preserves the operations $\circledast$ and $\cap$. The remainder of the proof, which involves additional details, is left for the reader to verify.
\end{proof}

By Lemma \ref{isobeta}, Corollary \ref{3construc} and Theorem \ref{teoisoMK} we have the following result.

\begin{corollary} 
Let $T \in {\bf{SKANc}}$. Then the centered Stone KAN-algebras $T$ and $M(T)$ are isomorphic.
\end{corollary}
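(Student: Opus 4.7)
The plan is to assemble the claimed isomorphism by chaining together three isomorphisms already established in the excerpt, so the proof is essentially a compatibility argument rather than a new construction.

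First I would invoke Lemma \ref{isobeta}, which gives an isomorphism $\beta \colon T \to K(C(T))$ in ${\bf{SKANc}}$, explicitly defined by $\beta(x) = (x \vee c, \sim x \vee c)$. This is the step that converts an abstract centered Stone KAN-algebra $T$ into the Kalman image of its center filter $C(T)$.

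Next I would use Corollary \ref{3construc}, according to which the three centered Stone KAN-algebras $K(T^{\theta})$, $K(T^{\lozenge})$, and $K(C(T))$ are pairwise isomorphic. In particular I pick the isomorphism between $K(C(T))$ and $K(T^{\lozenge})$; concretely this is induced by the Stone isomorphism $C(T) \cong T^{\lozenge}$ coming from Theorem \ref{C(T)=H} together with functoriality of $K$ on morphisms in ${\bf{Stone}}$.

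Finally I would apply Theorem \ref{teoisoMK}, which provides an isomorphism $t \colon M(T) \to K(T^{\lozenge})$ in ${\bf{SKANc}}$, and compose with its inverse $t^{-1}$. Putting the three pieces together yields a composite isomorphism
\[
T \;\xrightarrow{\beta}\; K(C(T)) \;\xrightarrow{\cong}\; K(T^{\lozenge}) \;\xrightarrow{t^{-1}}\; M(T)
\]
in ${\bf{SKANc}}$, which establishes the claim. No real obstacle arises, since each arrow has already been proved to preserve all the Stone KAN-algebra operations (including the center); the only thing worth verifying is that composition of morphisms in ${\bf{SKANc}}$ is itself a morphism, which is immediate. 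Hence the corollary follows directly, and this is exactly why the authors phrase it as a consequence of \ref{isobeta}, \ref{3construc}, and \ref{teoisoMK}.
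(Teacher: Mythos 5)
Your proposal is correct and follows exactly the route the paper indicates: it chains the isomorphism $\beta\colon T\to K(C(T))$ of Lemma \ref{isobeta}, the isomorphism $K(C(T))\cong K(T^{\lozenge})$ from Corollary \ref{3construc} (via Theorem \ref{C(T)=H} and functoriality of $K$), and the inverse of $t\colon M(T)\to K(T^{\lozenge})$ from Theorem \ref{teoisoMK}. This matches the paper's own justification, so nothing further is needed.
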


\begin{example} 
Consider the Stone KAN-algebra from Example \ref{SKANalgebra}. Then we have the Stone KANc-algebra $M(T)$ given by
\vspace{0.5cm}
\begin{center}
\hspace{0.25cm}
\put(00,00){\makebox(1,1){$\bullet$}}
\put(-30,30){\makebox(1,1){$\bullet$}}
\put(30,30){\makebox(1,1){$\bullet$}}
\put(00,60){\makebox(1,1){$\bullet$}}
\put(00,90){\makebox(1,1){$\bullet$}}
\put(00,120){\makebox(1,1){$\bullet$}}
\put(-30,150){\makebox(1,1){$\bullet$}}
\put(30,150){\makebox(1,1){$\bullet$}}
\put(00,180){\makebox(1,1){$\bullet$}}
\put(00,00){\line(1,1){30}}
\put(00,00){\line(-1,1){30}}
\put(-30,30){\line(1,1){30}}
\put(30,30){\line(-1,1){30}}
\put(00,60){\line(0,0){30}}
\put(00,90){\line(0,0){30}}
\put(00,120){\line(1,1){30}}
\put(00,120){\line(-1,1){30}}
\put(-30,150){\line(1,1){30}}
\put(30,150){\line(-1,1){30}}
\put(00,-10){\makebox(2,2){$ (0,0)$}}
\put(-50,30){\makebox(2,2){$ (0,a)$}} 
\put(50,30){\makebox(2,2){$ (0,b)$}} 
\put(20,60){\makebox(2,2){$ (0,c)$}} 
\put(20,90){\makebox(2,2){$(0,1)$}}
\put(20,120){\makebox(2,2){$ (d,1)$}}
\put(-50,150){\makebox(2,2){$ (e,1)$}} 
\put(50,150){\makebox(2,2){$ (f,1)$}} 
\put(00,190){\makebox(2,2){$ (1,1)$}}
\end{center}
\vspace{0.35cm}
Then the Stone KANc-algebras $M(T)$ and $K(T^{\lozenge})$ from Example \ref{ejKalman} are isomorphic.
\end{example}

\section{Conclusions and future work}

In this paper we introduced the variety of Stone-Kleene algebras with intuitionistic negation, or Stone KAN-algebras, and established a categorical equivalence between the category of Stone algebras and the category of centered Stone KAN-algebras. By employing Kalman's construction, we provided a method to construct a centered Stone KAN-algebra from a given Stone algebra. Additionally, drawing inspiration from Monteiro's construction, we given a construction for centered Stone KAN-algebras showing the relationship between the two constructions.

In future work, we plan to extend these results to the context of KAN-algebras with additional operations. This includes the tense operators defined on KAN-algebras given in \cite{Gustavo-Maia} and the weak quantifiers introduced in \cite{Conrado-Miguel-Hernan}.


\end{document}